\newtheorem{theorem}{Theorem}[section]
\newtheorem{coro}[theorem]{Corollary}
\newtheorem{prop}[theorem]{Proposition}
\theoremstyle{definition}
\newtheorem{definition}[theorem]{Definition}
\newtheorem{example}[theorem]{Example}
\theoremstyle{remark}
\newtheorem{remark}[theorem]{Remark}
\newcommand{\Rep}{\operatorname{Rep}}
\newcommand\Vc{\operatorname{Vec}}
\def\eps{\varepsilon}
\def\ot{\otimes}
\newcommand\Z{\mathbb Z}
\newcommand\N{\mathbb N}
\newcommand\I{\mathbb I}
\def\cB{\mathcal{B}}
\def\cE{\mathcal{E}}
\def\cM{\mathcal{M}}
\def\cC{\mathcal{C}}
\newcommand\op{\operatorname{op}}
\newcommand\Hom{\operatorname{Hom}}
\newcommand\id{\operatorname{id}}
\newcommand\co{\operatorname{co}}
\def\pf{\begin{proof}}
\def\epf{\end{proof}}
\newcommand{\yd}[1]{^{#1}_{#1}\mathcal{YD}}
\newcounter{commentcounter}
\begin{document}
\title[Pointed finite tensor categories over abelian groups]
{Pointed finite tensor categories over abelian groups}
\author[Angiono, Galindo]{Iv\'an Angiono and C\'esar Galindo}

\address{I. A.: FaMAF-CIEM (CONICET), Universidad Nacional de C\'ordoba,
Medina A\-llen\-de s/n, Ciudad Universitaria, 5000 C\' ordoba, Rep\'
ublica Argentina.}

\email{angiono@famaf.unc.edu.ar}

\address{C. G.: Departamento de matem\'aticas, Universidad de los Andes, Carrera 1 N. 18A -
10, Bogot\'a, Colombia.}

\email{cn.galindo1116@uniandes.edu.co}

\thanks{\noindent 2010 \emph{Mathematics Subject Classification.}
16W30, 18D10, 19D23. \newline  I. Angiono was partially supported by CONICET, Secyt (UNC), the
MathAmSud project GR2HOPF, and C. Galindo by Fondo de Investigaciones de la Facultad de
Ciencias de la Universidad de los Andes, Convocatoria 2017 para la
Financiaci\'on de proyectos de Investigaci\'on Categor\'ia: Profesores de Planta, proyecto ``Clasificaci\'on de algebras de Hopf de dimensi\'on peque\~{n}a". }

\begin{abstract}
We give a characterization of finite pointed tensor categories obtained as de-equivariantizations of finite-dimensional pointed Hopf algebras over abelian groups only in terms of the 
(cohomology class of the) associator of the pointed part. As an application we prove that every coradically graded pointed finite braided tensor category is a de-equivariantization of a finite dimensional pointed Hopf algebras over an abelian group.
\end{abstract}

\maketitle

\section{Introduction}

Let $\Bbbk$ be an algebraically closed field of characteristic zero. 
Via Tannakian reconstruction, tensor categories with a quasi-fiber functor with values in the category of finite dimensional $\Bbbk$-vector spaces correspond to categories $^H\cM$ of finite dimensional corepresentations
of a coquasi-Hopf algebra $H$ over $\Bbbk$.

A tensor category is pointed if every simple object is  invertible; this condition ensures the existence of a quasi-fiber functor as above. Hence any finite pointed tensor category  is equivalent to the category of comodules over a finite dimensional pointed coquasi-Hopf algebra.

In a previous paper \cite{AGP} written jointly with M. Pereira, we studied de-equivariantizations of Hopf algebras by applying Tannakian techniques.  We explicitly constructed  a coquasi-bialgebra such that its tensor category of comodules realizes the de-equivariantization of a Hopf algebra, \cite[Theorem 2.8]{AGP}. As an application, we defined a big family of pointed coquasi-Hopf algebras $A(H,G,\Phi)$ attached to a coradically graded pointed Hopf algebra $H$ and some extra group-theoretical data \cite[Proposition 3.3, Definition 3.5]{AGP}.

In this paper we pursue the study  of de-equivariantizations of Hopf algebras initiated
in \cite{AGP}. We characterize pointed finite tensor categories over abelian groups constructed as  de-equivariantizations of tensor categories of comodules over finite dimensional pointed Hopf algebras. For Hopf algebras, the de-equivariantization process generalizes the theory of central extensions of Hopf algebras. However, the central quotient is not necessarily a Hopf algebra but a coquasi-Hopf algebra. 

Given a tensor category $\cC$, we denote by $G(\cC)$ the group of isomorphism classes of invertible objects and by $\omega(\cC)\in H^3(G(\cC),\Bbbk^\times)$ the cohomology class defining the associator of the tensor subcategory of invertible objects.

Breen \cite[Proposition 4.1]{Breen} defined for every abelian group $\Lambda$ a  group homomorphism 
\begin{equation*}
\psi_\Lambda:H^3(\Lambda,\Bbbk^\times)\to \operatorname{Hom}(\wedge^3 \Lambda,\Bbbk^\times),
\end{equation*}
that measures if the category of Yetter-Drinfeld modules of $\Bbbk^\omega\Gamma$ (the coquasi-Hopf algebra defined over the group algebra $\Bbbk\Gamma$ with associator $\omega$) is pointed, see Theorem \ref{Condiciones equivalentes}.

A tensor category $\cC$ is \emph{coradically graded} if $\cC$ is equivalent to the category of comodules over a coradically graded coalgebra, see  \cite[Section 1.13]{Book-ENO} for a more categorical definition.
By \cite{angiono2016liftings} every finite-dimensional pointed Hopf algebra $H$ with abelian group of group-like elements $\Gamma$ is a cocycle deformation of $\cB(V )\#\Bbbk\Gamma$, where $V \in \yd{\Bbbk\Gamma}$ denotes the infinitesimal braiding of $H$ and $\cB(V)$ is the Nichols algebra of $V$. In particular $^H\cM$ and $^{\cB(V )\#\Bbbk\Gamma}\cM$ are tensor equivalent, so the pointed tensor categories obtained from $H$ or $\cB(V)\#\Bbbk\Gamma$ are the same. Therefore we may restrict just to coradically graded coquasi-Hopf algebras.
Our main result can be summarized as:
\begin{theorem}\label{main result}
A finite tensor category $\cC$ is tensor equivalent to a de-equivariantization of a pointed Hopf algebra over an abelian group if and only if $\cC$ is coradically graded, $G(\cC)$ is abelian and $\psi_{G(\cC)}(\omega_{\cC})\equiv 1$. Moreover, $\cC$ is realized as the corepresentations of a finite dimensional coquasi-Hopf algebra of the form $\cB(V)\#\Bbbk^{\omega_{\cC}}G(\cC)$.
\end{theorem}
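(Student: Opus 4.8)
The plan is to prove both implications using the explicit coquasi-Hopf algebra that realizes a de-equivariantization of a Hopf algebra from \cite[Theorem 2.8]{AGP}, the ensuing family $A(H,G,\Phi)$ of \cite[Proposition 3.3]{AGP}, and Theorem \ref{Condiciones equivalentes}, which identifies the vanishing of $\psi_\Gamma(\omega)$ with pointedness of $\yd{\Bbbk^\omega\Gamma}$. Throughout we use that, by \cite{angiono2016liftings}, a finite-dimensional pointed Hopf algebra over an abelian group $\Gamma$ may be replaced by its coradically graded core $\cB(V)\#\Bbbk\Gamma$ without changing its comodule category, and on the categorical side we work with a coradically graded realization $\cC\simeq{}^R\cM$, $R$ a coradically graded coquasi-Hopf algebra.

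Suppose first that $\cC$ is tensor equivalent to a de-equivariantization of $^H\cM$ with $H$ finite-dimensional pointed over an abelian group $\Gamma$; replace $H$ by $\cB(V)\#\Bbbk\Gamma$. The de-equivariantization datum amounts to a central subgroup $N\le\Gamma$ acting trivially on $V$, with quotient $G:=\Gamma/N$, and by \cite[Theorem 2.8 and Proposition 3.3]{AGP} the de-equivariantization is $^A\cM$ with $A=\cB(\overline V)\#\Bbbk^{\omega}G$, where $\overline V$ is the image of $V$ in $\yd{\Bbbk^{\omega}G}$ and $\omega$ is the $3$-cocycle determined by a set-theoretic section of $\Gamma\twoheadrightarrow G$. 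Since the underlying coalgebra of $\Bbbk^{\omega}G$ is the group coalgebra and $\cB(\overline V)$ is graded, $A$ is coradically graded with coradical $\Bbbk G$; hence $\cC$ is coradically graded, $G(\cC)=G$ is abelian and $\omega_{\cC}=[\omega]$. Finally, pushing the pointed Yetter--Drinfeld datum over $\Gamma$ forward along $\Gamma\twoheadrightarrow G$ shows that $\yd{\Bbbk^{\omega}G}$ is pointed, so Theorem \ref{Condiciones equivalentes} gives $\psi_{G(\cC)}(\omega_{\cC})\equiv1$.

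Conversely, assume $\cC$ is coradically graded with $G:=G(\cC)$ abelian and $\psi_{G}(\omega_{\cC})\equiv1$, and fix a $3$-cocycle $\omega$ representing $\omega_{\cC}$. Realize $\cC\simeq{}^R\cM$ with $R$ coradically graded of coradical $\Bbbk^{\omega}G$. By Theorem \ref{Condiciones equivalentes}, $\yd{\Bbbk^{\omega}G}$ is pointed; its degree-one part $V$ (the infinitesimal braiding of $R$) is therefore of diagonal type, $V=\bigoplus_i\Bbbk^{\chi_i}_{g_i}$, and — coradical gradedness confining the primitives of the diagram $R^{\co\Bbbk^{\omega}G}$ to degree one — the structure theory of coradically graded pointed coquasi-Hopf algebras identifies the diagram with the Nichols algebra $\cB(V)$, so that $R\cong\cB(V)\#\Bbbk^{\omega}G$; this already proves the final assertion of the theorem. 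Using $\psi_{G}(\omega)\equiv1$, choose a finite abelian group $\widetilde\Gamma$, a surjection $p\colon\widetilde\Gamma\twoheadrightarrow G$ with kernel $N$, and a section along which $p^{*}\omega$ becomes a coboundary, together with lifts $\widetilde g_i\in\widetilde\Gamma$ and $\widetilde\chi_i\in\widehat{\widetilde\Gamma}$ of $(g_i,\chi_i)$ that reproduce the braiding matrix $(\chi_j(g_i))$ and satisfy $\widetilde\chi_i|_N\equiv1$; put $\widetilde V:=\bigoplus_i\Bbbk^{\widetilde\chi_i}_{\widetilde g_i}\in\yd{\Bbbk\widetilde\Gamma}$. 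Then $\widetilde V$ has the same braiding as $V$, so $\cB(\widetilde V)\cong\cB(V)$ is finite-dimensional and $\widetilde H:=\cB(\widetilde V)\#\Bbbk\widetilde\Gamma$ is a finite-dimensional pointed Hopf algebra over the abelian group $\widetilde\Gamma$. Since $N$ is central and acts trivially on $\widetilde V$ it is a de-equivariantization datum, and \cite[Theorem 2.8 and Proposition 3.3]{AGP} identifies the resulting de-equivariantization with $^{\cB(V)\#\Bbbk^{\omega}G}\cM$ — the descent of the trivialized $p^{*}\omega$ along the chosen section recovering the class $\omega_{\cC}$ — hence with $\cC$.

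The main obstacle is this last construction: one must choose the finite abelian cover $p\colon\widetilde\Gamma\twoheadrightarrow G$ so as to simultaneously make $p^{*}\omega$ a coboundary, lift each $(g_i,\chi_i)$ to $(\widetilde g_i,\widetilde\chi_i)$ preserving the full braiding datum — including the $\omega$-twists carried by the Yetter--Drinfeld structure over $\Bbbk^{\omega}G$ — and keep $\widetilde\Gamma$ finite so that $\widetilde H$ is finite-dimensional. This is exactly where $\psi_{G}(\omega_{\cC})\equiv1$ enters: by Theorem \ref{Condiciones equivalentes} it is pointedness of $\yd{\Bbbk^{\omega}G}$, which at once forces $V$ to be of diagonal type — so that a diagonal lift $\widetilde V$ exists at all — and kills the exterior $\wedge^{3}$-component of $\omega$ that would obstruct trivializing $p^{*}\omega$ by an abelian extension. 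A further point to be verified is that the embedded copy of $\Rep(N)$ inside $^{\widetilde H}\cM$ is symmetric and not merely braided, which reduces to triviality on $N$ of the relevant bicharacter and is arranged together with the choice of $\widetilde\Gamma$.
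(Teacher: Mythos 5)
Your overall architecture matches the paper's: reduce to a coradically graded realization, identify the coquasi-Hopf algebra as a bosonization $\cB(V)\#\Bbbk^{\omega}G(\cC)$, lift along a finite abelian cover trivializing the cocycle, and recognize the result as a de-equivariantization via \cite{AGP}/\cite{BN}. However, two steps that you treat as routine are precisely the technical content of the paper, and as written your argument has genuine gaps there.

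First, the identification of the diagram with $\cB(V)$. Coradical gradedness only gives that the diagram $R'=R^{\co\,\Bbbk^{\omega}G}$ has its primitives concentrated in degree one, i.e.\ that $R'$ is a finite-dimensional \emph{post-Nichols} algebra of $V$; it does not give that $R'$ is generated in degree one, which is what $R'=\cB(V)$ means. This is exactly \cite[Conjecture 1.32.1]{Book-ENO} in this setting, and it is a theorem, not structure theory: the paper proves it (Theorem \ref{teo:pre-Nichols-are-Nichols}) by transporting $V$ through the braided functors $\yd{\Bbbk^{\omega}\Lambda}\to\yd{\Bbbk^{p^*\omega}\Gamma}\to\yd{\Bbbk\Gamma}$ (the second being the $2$-cocycle twist $F_\alpha$, implemented on the FRT bialgebra $H(W)$) to land on a genuine diagonal braided vector space over an honest abelian group, where Angiono's generation-in-degree-one theorem \cite{Ang} applies. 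Without this step both the ``moreover'' clause and your subsequent lifting of $\cB(V)$ are unsupported.

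Second, the construction of the cover and of the lift $\widetilde V$, which you yourself flag as ``the main obstacle'' without resolving it. The paper supplies both ingredients: Proposition \ref{Prop trivializacion de 3-cociclos abeliano} together with Theorem \ref{Condiciones equivalentes} produces a finite abelian $\Gamma$ and $p:\Gamma\to\Lambda$ with $p^*\omega=\delta(\alpha)$ exactly when $\psi_\Lambda(\omega)\equiv 1$ (an explicit cover doubles each cyclic factor); and the lift of $V$ is not obtained by choosing characters $\widetilde\chi_i$ by hand --- which is where the $\omega$-twists in the quasi-braiding make a direct matching of braiding matrices delicate --- but functorially, as $\widehat V\in\yd{\Bbbk^{p^*\omega}\Gamma}$ (same braiding, since the lifting functor is strict braided) followed by the braided equivalence $F_\alpha$, so that $W=F_\alpha(\widehat V)$ is automatically diagonal with $\dim\cB(W)=\dim\cB(V)<\infty$. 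The de-equivariantization statement is then established at the level of the coquasi-Hopf algebra surjection $\cB(\widehat V)\#\Bbbk^{p^*\omega}\Gamma\to\cB(V)\#\Bbbk^{\omega}\Lambda$ using \cite[Proposition 5.1]{BN}, after checking that $\Bbbk K$ ($K=\ker p$) is central, which uses $p^*\omega(a,g,h)=p^*\omega(g,a,h)=p^*\omega(g,h,a)=1$ for $a\in K$; your ``further point to be verified'' about $\Rep(N)$ being symmetric is this check, and it also needs to be carried out rather than asserted. Your forward implication is essentially fine, though the shortest route to $\psi_{G(\cC)}(\omega_{\cC})\equiv 1$ is that $\omega_{\cC}$ is $p$-trivial by construction, i.e.\ condition (c) of Theorem \ref{Condiciones equivalentes}, rather than going through pointedness of the Yetter--Drinfeld category.
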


Theorem \ref{main result} is proved in Section \ref{section:basic radically graded}, where a pointed Hopf algebra of the form $\cB(V)\#\Bbbk \Gamma$ is explicitly constructed. As a consequence of this result we obtain that every coradically graded pointed finite braided tensor category is tensor equivalent to a de-equivariantization of a coradically graded pointed Hopf algebra over an abelian group.

While the paper was at final stages of preparation, Huang, Liu, Yang and Ye posted the paper \cite{HLYYquasiHopf} containing results close to some of ours. They studied finite dimensional coquasi-Hopf algebras of the form $\cB(V)\# \Bbbk^\omega \Lambda$, with $\Lambda$ abelian and  $\omega$ trivializable. Their ideas for the construction of coquasi-Hopf algebras are different to our techniques, mainly they do not use the concept of de-equivariantization.
Instead, they find an specific representative for the cohomology class of a trivializable 3-cocycle and an specific trivialization, suitable for the construction of a pointed Hopf algebra of the form $H:=\cB(V)\# \Bbbk \Gamma$ such that the coquasi-Hopf algebra $\cB(V)\# \Bbbk^\omega \Lambda$ is a quotient of $H$. These examples correspond to the general construction in \cite{AGP}.

The organization of the paper is as follows. Section \ref{section:preliminaries} is devoted to preliminaries.
In Section \ref{trivializacion} we define the map $\psi_\Lambda:H^3(\Lambda,\Bbbk^\times)\to \operatorname{Hom}(\wedge^3 \Lambda,\Bbbk^\times)$ and characterizations of the condition  $\psi_\Lambda(\omega)=1$, which are used in the sequel. In Section \ref{section:basic radically graded} we prove generation in degree one for  coradically graded coquasi-Hopf algebras $A$ with 
associator $\omega \in H^3(G(A),\Bbbk^\times)$ such that $\psi_{G(A)}(\omega)=0$;
hence we prove \cite[Conjecture 1.32.1]{Book-ENO}  under the previous condition on the cohomology class determined by the associator. We prove Theorem \ref{main result}. We finish the section with an example of a coradically graded coquasi-Hopf algebra over $\Lambda=(\mathbb{Z}/2\mathbb{Z})^{\oplus3}$ with associator $\omega \in Z^3(\Lambda,\Bbbk^\times)$, such that $\psi_{\Lambda}(\omega)\neq 1$.

\subsection*{Notation}

Throughout the paper 
algebras and coalgebras are always defined over $\Bbbk$. We use Sweedler's notation for coalgebras omitting the sum symbol: $\Delta(c)= c_1\otimes c_2$ for all $c\in C$, $(C,\Delta,\varepsilon)$ a coalgebra.
Given a group $\Gamma$, $\widehat{\Gamma}$ denotes the group of characters of $\Gamma$ over $\Bbbk$, and $\langle \cdot,\cdot \rangle:\widehat{\Gamma}\times\Gamma\to\Bbbk^\times$ is the evaluation map.
For each $\theta \in \N_0$, we call $\I_{\theta} = \{n\in \N: n \le \theta \}$, or simply $\I$ if $\theta$ is clear from the context.
Also, $\delta_{x,y}$ is the Kronecker delta.

By a \emph{tensor category} we mean a $\Bbbk$-linear abelian category $\cC$ with finite dimensional $\Hom$ spaces and objects of finite length, endowed with a rigid $\Bbbk$-bilinear monoidal structure and such the unit object $\mathbf{1}$ is simple \cite{Book-ENO}. A tensor category is \emph{finite} if it is $\Bbbk$-linearly equivalent to the category of finite
dimensional  comodules over a finite dimensional $\Bbbk$-coalgebra.

\section{Preliminaries}\label{section:preliminaries}

In this section we recall some definitions and results about  coquasi-Hopf
algebras and  tensor categories.

\subsection{Coquasi-bialgebras}

A \emph{coquasi-bialgebra} $(H,m,u,\omega ,\Delta ,\varepsilon )$ is
a coalgebra $(H,\Delta ,\varepsilon )$ together with
coalgebra morphisms:
\begin{itemize}
\item the multiplication $m:H\otimes H\longrightarrow
H$ (denoted $ m(g\otimes h)=gh$),
\item the unit $u:\Bbbk \longrightarrow H$ (where we call
$ u(1)=1_{H} $),
\end{itemize}
and a convolution invertible element $\Omega \in (H\otimes H\otimes
H)^{\ast }$ such that
\begin{eqnarray}
h_{1}(g_{1}k_{1})\Omega (h_{2},g_{2},k_{2}) &=&\Omega
(h_{1},g_{1},k_{1})(h_{2}g_{2})k_{2},  \label{asociat multipl}
\\ 1_{H}h &=&h1_{H}=h,
\\ \Omega(h_{1}g_{1},k_{1},l_{1})\Omega (h_{2},g_{2},k_{2}l_{2}) &=& \Omega
(h_{1},g_{1},k_{1})\label{cocycle omega}
\\ && \qquad \times \Omega(h_{2},g_{2}k_{2},l_{1}) \Omega (g_{3},k_{3},l_{2}), \nonumber
\\ \Omega (h,1_{H},g) &=&\varepsilon (h)\varepsilon (g), \end{eqnarray}
for all $h,g,k,l\in H$. Note that 
\begin{align*}
\Omega (1_{H},h,g)& =\Omega (h,g,1_{H})=\varepsilon
(h)\varepsilon (g) & \text{for all } &g,h\in H.
\end{align*}
A coquasi-bialgebra $H$ is a \emph{coquasi-Hopf algebra} if there is a coalgebra map $\mathcal{S}:H\longrightarrow H^{\op}$
(the \emph{antipode}) and elements $\alpha $, $\beta \in H^{\ast }$ such that
\begin{align} \alpha (h)1_{H} &= \mathcal{S}(h_{1})\alpha (h_{2})h_{3},  \label{SalfaId}
\\ \beta (h)1_{H} &= h_{1}\beta (h_{2})\mathcal{S}(h_{3}),  \label{IdbetaS}
\\ \label{omega anihileaza S} \varepsilon (h)&=\omega (h_{1}\beta (h_{2}),\mathcal{S}(h_{3}),\alpha (h_{4})h_{5})
\\ &=\omega ^{-1}(\mathcal{S}(h_{1}),\alpha (h_{2})h_{3}\beta
(h_{4}),\mathcal{S}(h_{5})), & \text{for all }& h\in H.\nonumber
\end{align}

\begin{example}
Let $G$ be a discrete group. Recall that a (normalized)  3-cocycle $\omega \in Z^3(G, \Bbbk^\times)$ is a map $\omega:G\times G\to G\to \Bbbk^\times$ such that 
\begin{align*}
\omega(gh,c,l)\omega(g,h,kl)&=
\omega(g,h,k)\omega(g,hk,l)\omega(h,k,l), & \omega(g,1,h)=1,
\end{align*}
for all $g,h,k,l\in G$.  

Given $\omega \in Z^3(G,\Bbbk^\times)$, we define the coquasi-Hopf algebra $\Bbbk^{\omega} G$, with 
structure $(\Bbbk G, \Omega_\omega, S, \alpha,\beta)$, where $\Bbbk G$ is the group algebra with the usual comultiplication $\Delta(g)=g\otimes g$ for all $g\in G$, 
and $\Omega_\omega(g,h,k)=\omega(g,h,k)$ for all $g,h,k\in G$. The antipode structure is given by 
\begin{align*}
S(g)&=g^{-1}, & \alpha(g)&=1, & \beta(g)&=\omega(g,g^{-1},g)^{-1}, & 
\text{for all } &g\in G.
\end{align*}
\end{example}

\bigbreak

Let $H$ be a coquasi-Hopf algebra. The  category of left $H$-comodules $\ {}^{H} \mathcal{M}$ is rigid
and monoidal, where the tensor product is $\ot = \otimes_{\Bbbk}$, the comodule structure of the tensor product is the codiagonal one and the associator is
\begin{eqnarray*} \phi _{U,V,W} &:&(U\otimes V)\otimes
W\longrightarrow U\otimes (V\otimes W) \\ \phi _{U,V,W}((u\otimes
v)\otimes w) &=&\Omega (u_{-1},v_{-1},w_{-1})u_{0}\otimes
(v_{0}\otimes w_{0}) \end{eqnarray*} for $u\in U$, $v\in V$, $w\in
W$ and $U,V,W\in {}^{H}\mathcal{M}$. The dual coactions are given by
$\mathcal{S}$ and $\mathcal{S}^{-1}$, as in the case of Hopf
algebras.

\begin{example}
Let $G$ be a discrete group and $\omega \in Z^3(G,\Bbbk^\times)$. The tensor category $\ {}^{\Bbbk^{\omega}G} \mathcal{M}$ is
 $\Vc_G^\omega$, the category of $G$-graded vector spaces with associator induced by $\omega$.
\end{example}

\subsection{Braided tensor categories}
A tensor category $\cC$ is called \emph{braided} if it is endowed with a natural isomorphism 
\begin{align*}
c_{X,Y} & : X \otimes Y\to  Y \otimes X, & X,Y &\in\cC,
\end{align*}
satisfying the hexagon axioms, see \cite{JS}. 

\begin{example}[Pointed braided fusion categories]\label{Ex pointed braid}
Let $\cB$ be a pointed braided fusion category. The set of isormorphism classes of simple objects $\Gamma:=G(\cB)$ is an abelian group with product induced by the tensor product.

The associativity constraint defines a 3-cocycle $\omega \in Z^3(\Gamma,\Bbbk^\times)$. The braiding defines a function $c:\Gamma\times\Gamma\to \Bbbk^{\times}$ satisfying the following equations:
\begin{align}\label{eq:abelian-cocycle}
&
\begin{aligned}
\frac{c(g,hk)}{c(g,h)c(g,k)}&=\frac{\omega(g,h,k)\omega(h,k,g)}{\omega(h,g,k)}\\
\frac{c(gh,k)}{c(g,k)c(h,k)}&=\frac{\omega(g,k,h)}{\omega(g,h,k)\omega(k,g,h)},
\end{aligned}
& 
\text{for all } & g,h,k \in \Gamma.
\end{align}
These equations come from the hexagon axioms.
A pair $(\omega,c)$ satisfying \eqref{eq:abelian-cocycle} is called an \emph{abelian 3-cocycle}.
Following \cite{EM1,EM2} we denote by $Z^3_{ab}(\Gamma, \Bbbk^\times)$ the abelian group of all abelian 3-cocycles $(\omega,c)$.

An abelian 3-cocycle $(\omega,c)\in Z_{ab}^3(\Lambda,\Bbbk^\times)$ is called an \emph{abelian 3-coboundary} if there is $\alpha:\Lambda^{\times 2}\to \Bbbk^\times$, such that
\begin{align}\label{eq:abelian-coboundary}
&
\begin{aligned}
\omega(g,h,k)&=\frac{\alpha(g,h)\alpha(gh,k)}{\alpha(g,hk)\alpha(h,k)}\\
c(g,h)&=\frac{\alpha(g,h)}{\alpha(h,g)},
\end{aligned}
& 
\text{for all } & g,h,k \in \Lambda.
\end{align}
$B^3_{ab}(\Lambda,\Bbbk^\times)$ denotes the subgroup of $Z_{ab}^3(\Lambda,\Bbbk^\times)$ of abelian 3-coboundaries. The quotient group $H^3_{ab}(\Lambda,\Bbbk^\times):=Z_{ab}^3(\Lambda,\Bbbk^\times)/B^3_{ab}(\Lambda,\Bbbk^\times)$ is called the \emph{third group of abelian cohomology} of $\Lambda$.
\end{example}

\begin{example}[Corepresentations of coquasitriangular coquasi-Hopf algebras]

A \emph{coquasitriangular} coquasi-Hopf algebra is a pair $(H,r)$, where $H$ is a coquasi-Hopf algebra and $r:H\otimes H \to \Bbbk$ is  a convolution invertible map such that
\begin{align}
   &r(x_1,y_1)x_2y_2=y_1x_1r(y_2,x_2),\label{r-matrix 1}\\
   r(x,yz)&=\Omega(y_1,z_1,x_1)r(x_2,z_2)\Omega^{-1}(y_2,x_3,z_3)r(x_4,y_3)\Omega(x_5,y_4,z_4) \label{r-matrix 2}\\
    r(xy,z)&=\Omega(z_1,y_1,x_1)^{-1}r(x_2,z_2)\Omega(x_3,z_3,y_2)r(y_3,z_4)\Omega^{-1}(x_4,y_4,z_5),\label{r-matrix 3}
\end{align}
for all $x,y,z \in H$.
If $(H,r)$ is a coquasitriangular coquasi-Hopf algebra the $r$-form defines a braiding by
\begin{align*}
c_{V,W}:V\otimes W &\to W\otimes V, & 
v\otimes w&\mapsto r(v_{-1},w_{-1})w_0\otimes v_0.
\end{align*}Hence $(^H\cM,c)$ is a braided tensor category.

\end{example}
\begin{example}[Center construction]
An important example of a braided tensor category is the center $\mathcal{Z}(\cC)$ of a 
tensor category $(\cC,a,\mathbf{1})$. The center construction produces a braided tensor category
$\mathcal{Z}(\cC)$ from any tensor category $\cC$.
Objects of $\mathcal{Z}(\cC)$ are pairs $(Z, c_{-,Z})$, where $Z
\in \cC$ and $c_{-,Z} : - \otimes Z \to Z \otimes -$ is a natural
isomorphism such that the diagram

 \begin{equation}\label{equ: centro}
\xymatrix{&  Z\ot (X\ot Y)\ar[r]^{a_{Z,X,Y}}& (Z\ot X)\ot Y &\\ 
(X\ot Y)\ot Z \ar[ru]^{c_{X\ot Y,Z}}\ar[rd]_{a_{X,Y,Z}} &&& (X\ot Z)\ot Y \ar[lu]_{c_{X,Z}\ot \operatorname{id}_Y}\\
& X\ot( Y\ot Z) \ar[r]_{\operatorname{id}_X\ot c_{Y,Z}}&  X\ot (Z\ot Y) \ar[ru]_{a^{-1}_{X,Z,Y}}&
}
\end{equation}
commutes for all $X,Y,Z \in \cC$.
The braided tensor structure is the
following:
\begin{itemize}
\item the tensor product is $ (Y, c_{-,Y}) \otimes (Z, c_{-,Z}) =
(Y\otimes Z, c_{-,Y\otimes Z})$, where
\begin{align*}
c_{X,Y \otimes Z}& : X \otimes Y
\otimes Z \to Y \otimes Z \otimes X, \qquad X \in \cC,
\\
c_{X,Y \otimes Z}&= a_{Y,Z,X}(\id_Y \otimes c_{X,Z})a_{Y,X,Z}^{-1}(c_{X,Y} \otimes \id_Z)a_{X,Y,Z}.
\end{align*}
\item the braiding is the morphism $c_{X,Y}$,
\end{itemize}

\end{example}

\begin{example}[The Drinfeld center of $\Vc_\Lambda^\omega$.]
Let $\Lambda$ be a discrete group and $\omega\in Z^3(\Lambda,\Bbbk^\times)$. The Drinfeld center of $\Vc_\Lambda^\omega$ is equivalent to  $\yd{\Bbbk^{\omega}\Lambda}$,  the category  of Yetter-Drinfeld modules over $\Bbbk^{\omega}\Lambda$. The objects of $\yd{\Bbbk^{\omega}\Lambda}$ are $\Lambda$-graded vector spaces $V=\bigoplus_{g\in \Lambda} V_{g}$ with a linear map $\triangleright: \Bbbk^{\omega}\Lambda\ot V\to V$ such that $1\triangleright v=v$ for all $v\in V$,
\begin{align*}
(gh)\triangleright v &= \frac{\omega(g,hkh^{-1},h)}{\omega(g,h,k)\omega(ghkh^{-1}g^{-1},g,h)} (g\triangleright (h\triangleright v)), & g,h,k & \in\Lambda, & v &\in V_k,
\end{align*}
satisfying the following compatibility condition:
\begin{align*}
g\triangleright V_h & \subseteq V_{ghg^{-1}} & \text{for all } g,h & \in\Lambda.
\end{align*}
Morphisms in $\yd{\Bbbk^{\omega}\Lambda}$ are $\Lambda$-linear $\Lambda$-homogeneous maps. 
The tensor product of $V=\oplus_{g\in \Lambda}V$ and $W=\oplus_{g\in \Lambda}w$  is
$V\otimes W$ as vector space, with
\[(V\otimes W)_g=\bigoplus_{h\in G}V_h\otimes W_{h^{-1}g},\]
and for all $v\in V_g, w\in W_l$,
\[h\triangleright( v\otimes w)= \frac{\omega(hgh^{-1},hlh^{-1},l)\omega(h,g,l)}{\omega(hgh^{-1},h,l)}(h\triangleright v) \otimes (h\triangleright w).\]
The associativity constraints are the same as $\Vc_\Lambda^\omega$. The category is tensor braided, with braiding $c_{V,W}:V\ot W\to W\ot V$, $V,W\in\yd{\Bbbk^{\omega}\Lambda}$,
\begin{align*}
c_{V,W}(v\ot w) &= (g\triangleright w) \ot v, & g & \in\Lambda, & v &\in V_g, & w&\in W.
\end{align*}

\end{example}

\subsection{Bosonization for coquasi-Hopf algebras}
Now we recall the notation and results from \cite{ArdP-Jalg} but restricted to pointed coquasi-Hopf algebras.

Given a Hopf algebra $R$ in $\yd{\Bbbk^{\omega}\Lambda}$ with multiplication $\cdot:R\ot R\to R$ and comultiplication $\varDelta:R\to R\ot R$, $\varDelta(r)=r^{(1)}\ot r^{(2)}$, the \emph{bosonization} of $R$ by $\Bbbk^{\omega}\Lambda$ \cite[Definition 5.4]{ArdP-Jalg} is the coquasi-Hopf algebra $R\# \Bbbk^{\omega}\Lambda$ with underlying vector space $R\ot\Bbbk\Lambda$ and the following structure maps:
\begin{align*}
&(r\# g)(s\# h) = \frac{\omega(g,l,h) \omega(k,l,gh)}{\omega(k,g,lh) \omega(l,g,h)} r\cdot  (g\triangleright s)\# gh,
\\
&\Delta(r\# g) = \frac{1}{\omega(kj^{-1},j,g)} r^{(1)}\# lg \ \ot \ r^{(2)}\# g,
\\
&\Omega(r\#g, s\# h, t\# k)= \eps(r)\eps(s)\eps(t) \omega(g,h,k),
\end{align*}
for all $g,h,k,l\in\Lambda$, $r\in R_k$, $s\in R_l$, $t\in R$, where $r^{(1)}\ot r^{(2)}\in \oplus_j R_{kj^{-1}}\ot R_j$.

We have two canonical coquasi-Hopf algebra maps
\begin{align*}
\pi: & R\# \Bbbk^{\omega}\Lambda\to \Bbbk^{\omega}\Lambda, \ \pi(r\# g)=\eps(r)g, & \iota: &  \Bbbk^{\omega}\to R\# \Bbbk^{\omega}\Lambda, \ \iota(g)= 1\# g,
\end{align*}
such that $\pi\circ \iota=\id_{\Bbbk^{\omega}\Lambda}$.

Reciprocally, let $H$ be a coquasi-Hopf algebra and assume that there exist coquasi-Hopf algebra maps $\pi:H\to \Bbbk^{\omega}\Lambda$, $\iota:\Bbbk^{\omega}\Lambda\to H$ such that $\pi\circ \iota=\id_{\Bbbk^{\omega}\Lambda}$. Then $H\simeq R\# \Bbbk^{\omega}\Lambda$, where $R=H^{\co \pi}$ admits a structure of Hopf algebra in $\yd{\Bbbk^{\omega}\Lambda}$ \cite[Theorem 5.8]{ArdP-Jalg}.

\medspace

In particular this applies for $H=\oplus_{n\geq 0} H_n$ coradically graded such that $H_0=\Bbbk^{\omega}\Lambda$ \cite[6.1]{ArdP-Jalg}. Here, $R$ is a graded Hopf algebra in $\yd{\Bbbk^{\omega}\Lambda}$:
\begin{align*}
R&=\oplus_{n\geq 0} R_n, & \text{with } R_n&=R\cap H_n, \ n\geq 0, & \text{so }&R_0=\Bbbk 1.
\end{align*}

\subsection{Nichols algebras}
Nichols algebras can be defined over any abelian braided tensor category see \cite{Sch}.  In particular we may consider Nichols algebra over $\cC=\mathcal{Z}(^H\cM)$ or $\cC=\yd{H}$, where $H$ is a coquasi-bialgebra, see \cite{AS-pointed} for the definition when $H$ is a Hopf algebra and \cite{HLYYquasiHopf} for $H=\Bbbk^{\omega}\Lambda$.

Given an object $V\in \cC$ and $n\geq 3$, $V^{\ot n}$ denotes $(\cdots ((V\ot V)\ot \cdots )\ot V)$, $n$ copies of $V$. We consider the following (graded) Hopf algebras in $\cC$:
\begin{itemize}[leftmargin=*]
	\item the tensor algebra 
	$T(V)=\oplus_{n\geq 0} V^{\ot n}$, with product given by the canonical isomorphism $V^{\ot m} \ot V^{\ot n}\simeq V^{\ot (m+n)}$; the coproduct $\Delta:T(V)\to T(V)\ot T(V)$ is the unique graded algebra map such that $\Delta_{0,1}:V\to \Bbbk\ot V$ and $\Delta_{1,0}:V\to V\ot\Bbbk$ are the canonical isomorphisms.
	\item the tensor coalgebra 
	$C(V)=\oplus_{n\geq 0} V^{\ot n}$, with coproduct
	\begin{align*}
	\Delta &=\oplus_{m,n\geq 0}:C(V)\to C(V)\ot C(V), & \Delta_{m,n}:&V^{\ot (m+n)} \overset{\sim}{\to} V^{\ot m} \ot V^{\ot n}; 
	\end{align*}
	the product $\Delta:T(V)\to T(V)\ot T(V)$ is the unique graded coalgebra map induced by the canonical isomorphisms $\Bbbk\ot V \simeq V\simeq V\ot\Bbbk$.
\end{itemize}

\medspace

There exists a unique graded Hopf algebra map 
$T(V)\to C(V)$ in $\cC$, which is the identity on $V$. The \emph{Nichols algebra} $\cB(V)$ of $V$ is the image of this map: it is a graded Hopf algebra in $\cC$.

We may identify $\cB(V)$ as a quotient $\cB(V) = T(V)/\mathcal{J}(V)$ with the following universal property:
$\mathcal{J}(V)$ is the largest coideal of $T(V)$ spanned by elements of $\mathbb{N}$-degree
$\geq 2$. There are other characterizations of $\cB(V)$ \cite{Sch}.

A \emph{post-Nichols algebra} of $V$ is a graded Hopf subalgebra
$\cE=\oplus_{n\in\N_0} \cE_n$ of $C(V)$ in $\cC$ such that $\cE_1=V$, see \cite{AAR-Dividedpowers}; hence $\cB(V)\subseteq \cE$ and the set of primitive elements of $\cE$ is exactly $\cE_1$.

\section{Trivializations of elements in $H^3(\Lambda,\Bbbk^\times)$}\label{trivializacion}

In this section we study a family of 3-cocycles of finite abelian groups called \emph{trivializable}. These are the cocycles considered as associators for pointed tensor categories in Section \ref{section:basic radically graded}.

\medspace

Let $\Lambda$ be a finite abelian group. We denote by  $\wedge^n \Lambda$ the $n$-th exterior power of $\Lambda$, viewed as a $\mathbb{Z}$-module.
For each $\omega \in Z^3(\Lambda,\Bbbk^\times)$, Breen \cite[Proposition 4.1]{Breen} defined an alternating  trilinear map
\begin{align*}
\psi_\Lambda(\omega)(l_1,l_2 ,l_3) & =\prod_{\sigma \in \mathbb{S}_3} \omega(l_{\sigma(1)}, l_{\sigma(2)}, l_{\sigma(3)})^{\operatorname{sng}(\sigma)}, &  l_1,l_2,l_3 & \in \Lambda.
\end{align*}
The group homomorphism $\psi_\Lambda:Z^3(\Lambda,\Bbbk^\times)\to \operatorname{Hom}(\wedge^3 \Lambda,\Bbbk^\times)$ induces a group homomorphism 
\begin{equation*}
\psi_\Lambda:H^3(\Lambda,\Bbbk^\times)\to \operatorname{Hom}(\wedge^3 \Lambda,\Bbbk^\times).
\end{equation*}

Note that $\operatorname{Hom}(\Lambda^{\otimes 3},\Bbbk^\times)\subset Z^3(\Lambda,\Bbbk^\times)$. Hence, if $\Lambda$ is finite the restriction of $\psi_\Lambda$ to $\operatorname{Hom}(\Lambda^{\otimes 3},\Bbbk^\times)$ is surjective. Thus $\psi_\Lambda$ is surjective.

Given $\omega\in Z^3(\Lambda, \Bbbk^\times)$, we denote by $p^*\omega\in Z^3(\Gamma, \Bbbk^\times)$ the pull-back of $\omega$ by $p$; that is, the 3-cocycle defined by \begin{align*}
p^*\omega(g,h,k)&=\omega(p(g),p(h),p(k)), & & g,h,k \in \Gamma.
\end{align*}

\medspace


\begin{prop}\label{Prop trivializacion de 3-cociclos abeliano}
Let $\Lambda= \mathbb {Z}/n_1\mathbb{Z}\oplus \cdots \oplus \mathbb Z/n_m\mathbb{Z},$ and 
\[p:\Gamma:=\mathbb {Z}/2n_1\mathbb{Z}\oplus \cdots \oplus \mathbb Z/2n_m\mathbb{Z}\to \Lambda:=\mathbb {Z}/n_1\mathbb{Z}\oplus \cdots \oplus \mathbb Z/n_m\mathbb{Z},\] the canonical epimorphism. For any $(\omega,c)\in Z^3_{ab}(\Lambda,\Bbbk^\times)$ the pull back $p^*\omega \in H^3(\Gamma, \Bbbk^\times)$ is trivial.
\end{prop}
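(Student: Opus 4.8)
The plan is to show that the pull-back $p^*\omega$ becomes an abelian $3$-coboundary on $\Gamma$, i.e.\ that $p^*(\omega,c) \in B^3_{\mathrm{ab}}(\Gamma,\Bbbk^\times)$, and then invoke the fact (classical theory of Eilenberg--MacLane, see \cite{EM1,EM2}) that $H^3_{\mathrm{ab}}(\Gamma,\Bbbk^\times) \cong H^3(\Gamma,\Bbbk^\times)$ via the forgetful map $(\omega,c)\mapsto \omega$, so that triviality of the abelian cohomology class forces triviality of $\omega$ itself in $H^3(\Gamma,\Bbbk^\times)$. The advantage of working with abelian $3$-cocycles is that, again by Eilenberg--MacLane, the group $H^3_{\mathrm{ab}}(\Lambda,\Bbbk^\times)$ is isomorphic to the group of quadratic forms $q:\Lambda\to\Bbbk^\times$ via the trace map $(\omega,c)\mapsto q$, $q(g)=c(g,g)$. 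So the entire statement reduces to a statement about quadratic forms: if $q$ is the quadratic form attached to $(\omega,c)$ on $\Lambda$, then the pulled-back quadratic form $p^*q = q\circ p$ on $\Gamma$ must be trivial.

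First I would set up the dictionary: $(\omega,c)$ corresponds to a quadratic form $q$ on $\Lambda$ with associated bilinear form $b(g,h)=q(g+h)q(g)^{-1}q(h)^{-1}$. Pulling back along $p$ gives the quadratic form $q\circ p$ on $\Gamma = \bigoplus \mathbb{Z}/2n_i\mathbb{Z}$. The key point is that on $\Gamma$ we have room to \emph{trivialize} $q\circ p$ as a quadratic form, because every element $p(e_i)$ of order $n_i$ in $\Lambda$ is hit by an element $e_i$ of order $2n_i$ in $\Gamma$. Concretely, a quadratic form on a cyclic group $\mathbb{Z}/m\mathbb{Z}$ is determined by its value on the generator, and that value is an $m$-th root of unity when $m$ is odd but only a $2m$-th root of unity in general; on $\mathbb{Z}/2n\mathbb{Z}$, the form $q\circ p$ sends the generator to $q(p(e_i))$, which is an $n_i$-th root of unity (since $p(e_i)$ has order $n_i$), hence certainly a $2n_i$-th root of unity that is actually an $n_i$-th root — and such a form admits a square root, i.e.\ it is a $2$-coboundary of a character-like function, hence trivial in $H^3_{\mathrm{ab}}$. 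More precisely, I would exhibit an explicit $\beta:\Gamma\to\Bbbk^\times$ (built from roots of the values $q(p(e_i))$ and of the off-diagonal bilinear values $b(p(e_i),p(e_j))$, which live in roots of unity of order dividing $\gcd(n_i,n_j)$) realizing $q\circ p$ as the quadratic form of an abelian $3$-coboundary, i.e.\ check \eqref{eq:abelian-coboundary} for $p^*\omega$ and $p^*c$ with a suitable $\alpha = \alpha(\beta)$.

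The main obstacle is the bookkeeping: abelian cohomology of a non-cyclic group is not simply the product of the cyclic pieces (there is a cross-term coming from $\mathrm{Ext}$ and from the pairings $b(e_i,e_j)$), so I cannot just trivialize one coordinate at a time. I expect the cleanest route is to use the Eilenberg--MacLane classification $H^3_{\mathrm{ab}}(\Lambda,\Bbbk^\times)\cong \mathrm{Quad}(\Lambda,\Bbbk^\times)$ directly and argue that the pull-back map $\mathrm{Quad}(\Lambda,\Bbbk^\times)\to\mathrm{Quad}(\Gamma,\Bbbk^\times)$ induced by $p$ is zero: any quadratic form $q$ on $\Lambda$ takes values in the group $\mu$ of roots of unity, and its pull-back $q\circ p$, evaluated on the generators $e_i$ of $\Gamma$, lands in $\mu_{n_i}\subseteq \mu_{2n_i}$; a quadratic form on $\mathbb{Z}/2n_i\mathbb{Z}$ with this property is a square, and the off-diagonal bilinear data is unobstructed for the same reason, so $q\circ p$ is in the image of the (squaring-related) boundary and hence trivial. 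Assembling these choices into a single coherent $\alpha$ on $\Gamma$ and verifying the cocycle identities is the routine-but-delicate part; once $p^*(\omega,c)\in B^3_{\mathrm{ab}}(\Gamma,\Bbbk^\times)$, the isomorphism $H^3_{\mathrm{ab}}(\Gamma,\Bbbk^\times)\xrightarrow{\sim} H^3(\Gamma,\Bbbk^\times)$ gives $p^*\omega$ trivial in $H^3(\Gamma,\Bbbk^\times)$, as desired.
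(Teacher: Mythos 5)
Your strategy has a genuine gap: the central claim that $p^*(\omega,c)$ becomes an abelian $3$-coboundary on $\Gamma$ (equivalently, that the pulled-back quadratic form $q\circ p$ is trivial) is false. An abelian coboundary $(\delta\alpha,\alpha/\alpha^{\mathrm{op}})$ has quadratic form $g\mapsto\alpha(g,g)/\alpha(g,g)=1$, and by the Eilenberg--MacLane theorem the quadratic form determines the abelian cohomology class; so $p^*(\omega,c)\in B^3_{ab}(\Gamma,\Bbbk^\times)$ would force $c(p(g),p(g))\equiv 1$. But $(q\circ p)(\vec e_i)=q(p(\vec e_i))=q_i$ need not equal $1$: take $\Lambda=\mathbb{Z}/3\mathbb{Z}$ and the quadratic form $q(k)=\zeta_3^{k^2}$ with $\zeta_3$ a primitive cube root of unity; then $q\circ p$ on $\Gamma=\mathbb{Z}/6\mathbb{Z}$ still takes the value $\zeta_3$ at the generator, so the pulled-back pair is \emph{not} an abelian coboundary. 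Your argument that a quadratic form on $\mathbb{Z}/2n_i\mathbb{Z}$ whose value at the generator is an $n_i$-th root of unity ``is a square, hence trivial in $H^3_{ab}$'' conflates being a square in the group of quadratic forms with being the trivial form. Separately, the asserted isomorphism $H^3_{ab}(\Gamma,\Bbbk^\times)\cong H^3(\Gamma,\Bbbk^\times)$ via the forgetful map is also false (for $\Gamma=\mathbb{Z}/2\mathbb{Z}$ these groups are $\mathbb{Z}/4\mathbb{Z}$ and $\mathbb{Z}/2\mathbb{Z}$); you only need the true implication ``abelian coboundary $\Rightarrow$ ordinary coboundary'', but since the premise fails this does not rescue the argument.

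What is true, and what the paper proves, is only that the underlying \emph{ordinary} class of $p^*\omega$ dies; the abelian class survives in general, precisely because the forgetful map $H^3_{ab}\to H^3$ has a large kernel. The paper's route: replace $\omega$ by Quinn's explicit representative $h=\prod_i h_i$, where $h_i$ is built from $q_i=q(\vec e_i)$; since $b_q$ is a bicharacter, $q_i^{2n_i}=b_q(\vec e_i,\vec e_i)^{n_i}=1$, so the class of $h_i$ in $H^3(\mathbb{Z}/n_i\mathbb{Z},\Bbbk^\times)$, which is $q_i^{n_i}$, has order at most $2$; and by Example \ref{Ejemplo grupo ciclico} the pull-back along $\mathbb{Z}/2n_i\mathbb{Z}\to\mathbb{Z}/n_i\mathbb{Z}$ is the squaring map on $H^3$, hence kills it. If you want to keep your quadratic-form framework, the statement to aim for is that the image of $q$ under the forgetful map $H^3_{ab}\to H^3$ is $2$-torsion and that $p^*$ is multiplication by $2$ on each cyclic factor of $H^3$ --- which is essentially the paper's argument, not a trivialization in abelian cohomology.
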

\begin{proof}
If $(\omega,c)\in Z^3_{ab}(\Lambda,\Bbbk^\times)$ the map 
\begin{align*}
    q&:\Lambda\to \Bbbk^\times,& q(l) &= c(l,l), & l&\in\Lambda,
\end{align*}
is a quadratic form on $\Lambda$; that is, $q(l^{-1})=q(l)$ for all $l\in\Lambda$ and the map 
\begin{align*}
    b_q(k,l)=q(kl)q(k)^{-1}q(l)^{-1},&& k,l\in \Lambda,
\end{align*}is a bicharacter.

The quadratic form $q$ determines completely the abelian cohomology class of the pair $(w,c)$, see \cite[Theorem 26.1]{EM2}. Using the map $q$, Quinn \cite{Quinn} defined an explicit  abelian 3-cocycle $(h,c)$ with $c(l,l)=q(l)$ for all $l \in \Lambda$. Assume that $\Lambda= \mathbb {Z}/n_1\mathbb{Z}\oplus \cdots \oplus \mathbb Z/n_m\mathbb{Z}$. For each $i\in \{1,\ldots ,m\}$ let $q_i:=q(\vec{e}_i)$ and $h_i\in Z^3(\mathbb {Z}/n_i\mathbb{Z},\Bbbk^\times)$ defined by 
\begin{align*} 
h_i(a,b,c) &=\begin{cases} 1, \qquad &\text{if } b+c<n_i,\\
q_i^{n_ia}. \quad  &\text{if $b+c\geq n_i$,}
\end{cases} & & 0\leq a,b,c<n_i.
\end{align*}
Then by \cite{Quinn} and \cite[Theorem 26.1]{EM2},  $h\in Z^3(\Lambda,\Bbbk^\times)$ given by $$h(\vec{x},\vec{y},\vec{z})=h_1(x_1,y_1,z_1)h_2(x_2,y_2,z_2)\cdots h_m(x_m,y_m,z_m),$$ is a 3-cocycle cohomologous to $\omega$. 

For any $n\in \mathbb{N}$ and  $a \in \Gamma$, we have $q(a^m)=q(a)^{2m^2}$. Hence $h_i$ has order at most two. By Example \ref{Ejemplo grupo ciclico}, via the epimorphism \[p:\mathbb {Z}/2n_1\mathbb{Z}\oplus \cdots \oplus \mathbb Z/2n_m\mathbb{Z}\to \mathbb {Z}/n_1\mathbb{Z}\oplus \cdots \oplus \mathbb Z/n_m\mathbb{Z},\] the pull-back $p^*h$ is trivial, and also $p^*\omega$.
\end{proof}

\begin{theorem}\label{Condiciones equivalentes}
Let $\omega \in H^3(\Lambda,\Bbbk^\times)$. The following statements are equivalent:
\begin{enumerate}[leftmargin=*,label=\rm{(\alph*)}]
\item\label{item:psi-trivial} $\psi_\Lambda(\omega)\equiv1$.
\item\label{item:braided-fusion-pointed} The braided fusion category $\yd{\Bbbk^{\omega}\Lambda}$ is pointed.
\item\label{item:omega-trivializable} There exists a finite abelian group $\Gamma$ and a group epimorphism $p:\Gamma\to \Lambda$  such that the pullback $p^*\omega \in H^n(\Gamma,\Bbbk^\times)$ is trivial.
\end{enumerate}
\end{theorem}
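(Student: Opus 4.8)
The plan is to prove the cyclic chain of implications $\ref{item:psi-trivial}\Rightarrow\ref{item:braided-fusion-pointed}\Rightarrow\ref{item:omega-trivializable}\Rightarrow\ref{item:psi-trivial}$, with the bulk of the work in the first and last steps.

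For $\ref{item:psi-trivial}\Rightarrow\ref{item:braided-fusion-pointed}$, I would use the explicit description of $\yd{\Bbbk^{\omega}\Lambda}$ as the Drinfeld center of $\Vc_\Lambda^\omega$ given in the excerpt. Since $\Lambda$ is abelian, the grading constraint $g\triangleright V_h\subseteq V_{ghg^{-1}}=V_h$ forces each $\triangleright$-action to preserve the grading, so a simple object supported at $h\in\Lambda$ is a projective-type representation of $\Lambda$ on a single graded component, with the associativity data twisted by the $2$-cocycle $\beta_h(g,g')=\frac{\omega(g,h,g')}{\omega(g,g',h)\omega(h,g,g')}\cdot(\text{correction})$ read off from the displayed formula $(gh)\triangleright v=\frac{\omega(g,hkh^{-1},h)}{\omega(g,h,k)\omega(ghkh^{-1}g^{-1},g,h)}g\triangleright(h\triangleright v)$. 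The category is pointed precisely when every such twisted group algebra $\Bbbk^{\beta_h}\Lambda$ is semisimple with all simple modules one-dimensional, i.e. when the alternating bicharacter attached to (the cohomology class of) $\beta_h$ is trivial for every $h$. A direct computation identifies that alternating bicharacter evaluated at $(g,g')$ with $\beta_h(g,g')\beta_h(g',g)^{-1}$, which after unravelling equals $\psi_\Lambda(\omega)(g,g',h)^{\pm1}$; hence $\psi_\Lambda(\omega)\equiv1$ makes all $\Bbbk^{\beta_h}\Lambda$ have one-dimensional irreducibles, so $\yd{\Bbbk^{\omega}\Lambda}$ is pointed. I expect this bicharacter identification — matching the antisymmetrization of the Drinfeld-center $2$-cocycle with Breen's map — to be the main obstacle, and I would isolate it as a lemma before this theorem.

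For $\ref{item:braided-fusion-pointed}\Rightarrow\ref{item:omega-trivializable}$: if $\yd{\Bbbk^{\omega}\Lambda}$ is pointed, then it is a pointed braided fusion category, so by Example \ref{Ex pointed braid} its associativity and braiding data form an abelian $3$-cocycle $(\omega',c)$ on the finite abelian group $\Gamma':=G(\yd{\Bbbk^{\omega}\Lambda})$. The forgetful functor $\yd{\Bbbk^{\omega}\Lambda}\to\Vc_\Lambda^\omega$ is a tensor functor, which on invertible objects gives a group homomorphism $\Gamma'\to\Lambda$; it is surjective because every $h\in\Lambda$ is the support of some invertible Yetter-Drinfeld module (take a one-dimensional $\beta_h$-representation when $\psi$ is trivial, but more robustly: pointedness forces the simple supported at $h$ to be invertible). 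Pulling back $\omega$ along this epimorphism recovers the associator of $\Vc_\Lambda^\omega$ restricted along the functor, which is cohomologous to $\omega'$ on $\Gamma'$; but $\omega'$ is the $\omega$-part of an abelian $3$-cocycle, hence by Proposition \ref{Prop trivializacion de 3-cociclos abeliano} there is a further epimorphism $\Gamma\twoheadrightarrow\Gamma'$ killing $\omega'$. Composing the two epimorphisms gives $p:\Gamma\to\Lambda$ with $p^*\omega$ trivial. (One must check the functor-pullback equals $p^*\omega$ up to coboundary; this is the standard compatibility of associators under tensor functors between pointed categories.)

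For $\ref{item:omega-trivializable}\Rightarrow\ref{item:psi-trivial}$: $\psi$ is natural, so $\psi_\Gamma(p^*\omega)=p^*(\psi_\Lambda(\omega))$ where on the right $p^*$ denotes the induced map $\Hom(\wedge^3\Lambda,\Bbbk^\times)\to\Hom(\wedge^3\Gamma,\Bbbk^\times)$, i.e. precomposition with $\wedge^3 p$. If $p^*\omega$ is trivial in $H^3(\Gamma,\Bbbk^\times)$ then $\psi_\Gamma(p^*\omega)\equiv1$, so $\psi_\Lambda(\omega)\circ(\wedge^3 p)\equiv1$. Since $p$ is surjective, $\wedge^3 p:\wedge^3\Gamma\to\wedge^3\Lambda$ is surjective, and an alternating trilinear form vanishing on the image of a surjection vanishes identically; hence $\psi_\Lambda(\omega)\equiv1$. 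This last implication is essentially formal given naturality of Breen's construction, so I would state naturality explicitly (it follows from the product formula defining $\psi$) and then the argument is a one-liner.
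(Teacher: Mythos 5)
Your proposal is correct and follows essentially the same route as the paper: the same cyclic structure, the same key identity relating the antisymmetrization of the $2$-cocycle $\beta_l(g,h)=\omega(g,l,h)/\bigl(\omega(g,h,l)\omega(l,g,h)\bigr)$ to $\psi_\Lambda(\omega)$, the same use of the forgetful functor plus Proposition \ref{Prop trivializacion de 3-cociclos abeliano} for \ref{item:braided-fusion-pointed}$\Rightarrow$\ref{item:omega-trivializable}, and the same naturality-of-$\psi$ plus surjectivity of $\wedge^3 p$ argument for \ref{item:omega-trivializable}$\Rightarrow$\ref{item:psi-trivial}. The only cosmetic difference is that for \ref{item:psi-trivial}$\Leftrightarrow$\ref{item:braided-fusion-pointed} you compute directly with the twisted group algebras $\Bbbk^{\beta_h}\Lambda$, whereas the paper invokes the exact sequence $0\to\widehat{\Lambda}\to\operatorname{Inv}(\yd{\Bbbk^{\omega}\Lambda})\to\Lambda_\omega\to0$ from \cite{GP} and the divisibility of $\Bbbk^\times$ to reduce to symmetry of $\beta_l$.
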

\begin{proof}
\ref{item:psi-trivial} $\iff$ \ref{item:braided-fusion-pointed}. For each $l\in \Lambda$, the map \begin{align*}
    \beta_l:\Lambda\times \Lambda\to \Bbbk^\times, && \beta_l(g,h)=\frac{\omega(g,l,h)}{\omega(g,h,l) \omega(l,g,h)}
\end{align*} is a 2-cocycle; that is, it satisfies the equation
\begin{align*}
\beta_l(g,h)\beta_l(gh,k)&=\beta_l(g,hk)\beta_l(h,k), &
\text{for all } & g,h,k\in \Lambda.
\end{align*}
By \cite[Example 6.3]{GP} we have  an exact sequence of groups
\[0\to \widehat{\Lambda} \to \operatorname{Inv}(\yd{\Bbbk^{\omega}\Lambda}) \to \Lambda_{\omega}\to 0,\]
where $\Lambda_\omega=\{l\in \Lambda: 0=[\beta_l]\in H^2(\Lambda,\Bbbk^\times)\}$. 
Then $\yd{\Bbbk^{\omega}\Lambda}$ is pointed if and only if $0=[\beta_l]$ for all $l\in \Lambda$.

Since $\Bbbk^\times$ is divisible, $\beta_l$ has trivial cohomology class if and only if $\beta_l$ is symmetric. In conclusion, $\yd{\Bbbk^{\omega}\Lambda}$ is pointed if and only if $\beta_l(g,h)=\beta_l(h,g)$ for all $l,g,h\in \Lambda$. Since
\begin{align*}
\frac{\beta_{l}(g,h)}{\beta_l(h,g)}&=\psi_\Lambda(l,g,h) & \text{for all }& g,h,l\in\\Lambda,
\end{align*}
$\yd{\Bbbk^{\omega}\Lambda}$ is pointed if and only if $\psi_\Lambda(\omega)=1$. 

\ref{item:omega-trivializable} $\implies$ \ref{item:psi-trivial}. Let $p:\Gamma\to \Lambda$ be an epimorphism of finite abelian groups. By \cite[\S 7.2, Proposition 3 ]{Bourbaki}, the map 
\begin{align*}
\wedge^{n}(p): \wedge^{n} \Gamma &\to \wedge^{n}\Lambda, & 
g_1\wedge\cdots \wedge g_n &\mapsto p(g_1)\wedge \cdots \wedge p(g_n),
\end{align*}
is surjective. Since $\Lambda$ is finite, the group homomorphism 

\begin{align*}
\wedge^n(p)^*: \operatorname{Hom}(\wedge^n \Lambda,\Bbbk^\times) &\to \operatorname{Hom}(\wedge^n \Gamma,\Bbbk^\times)\\
f &\mapsto [g_1\wedge \cdots \wedge g_n\mapsto f(p(g_1)\wedge \cdots \wedge p(g_n))].
\end{align*}is injective for all $n$.

Let $\omega\in H^3(\Lambda,\Bbbk^\times)$ such that $p^*(\omega)=0$. Then $\wedge^{(3)} (p)^*\circ \psi_\Lambda (\omega)=0$, since the diagram 
\begin{equation}
\begin{tikzcd}
H^3(\Lambda,\Bbbk^\times) \ar{d}{\psi_\Lambda} \ar{r}{p^*} & H^3(\Gamma,\Bbbk^\times) \ar{d}{\psi_\Gamma}\\
\operatorname{Hom}(\wedge^3 \Lambda,\Bbbk^\times) \ar{r}{\wedge^{(3)} (p)^*}& \operatorname{Hom}(\wedge^3 \Gamma,\Bbbk^\times)    
\end{tikzcd}
\end{equation}
is commutative. By the injectivity of $\wedge^3(p)^*$, we have that $\psi_\Lambda (\omega)=0$.

\ref{item:braided-fusion-pointed} $\implies$ \ref{item:omega-trivializable}. Assume that \ref{item:braided-fusion-pointed} holds. Then  there is a finite abelian group $\Gamma$ and an abelian 3-cocycle $(\alpha,c)\in Z^3_{ab}(\Gamma,\Bbbk^\times)$ such that $\yd{\Bbbk^{\omega}\Lambda} \cong \Vc_\Gamma^{(\alpha,c)}$ as braided fusion categories. The forgetful functor 
$\yd{\Bbbk^{\omega}\Lambda}\to \Vc_\Lambda^{\omega}$ defines a group epimorphism $\pi_1:\Gamma\to \Lambda$ such that $\pi_1^*([\omega])=[\alpha]$. By Proposition  \ref{Prop trivializacion de 3-cociclos abeliano}, there exists an abelian group $\Gamma_2$ and an epimorphism $\pi_2:\Gamma_2\to \Gamma_1$ such that $\pi_1^*([\alpha])=0$, hence $\pi_2\circ \pi_1:\Gamma_2\to \Lambda$ trivializes $\omega$.
\end{proof}

\begin{definition}
Let $\omega\in H^3(\Lambda,\Bbbk^\times)$. We say that $\omega$ is \emph{trivializable} if it satisfies one of the equivalent conditions of Theorem \ref{Condiciones equivalentes}. If $p:\Gamma\to \Lambda$  is an epimorphism of abelian groups such that the pullback $p^*\omega \in H^3(\Gamma,\Bbbk^\times)$ is trivial, we say that $\omega$ is $p$-\emph{trivial}.  
\end{definition}

\begin{example}\label{Ejemplo grupo ciclico}
Let $C_n$ be the cyclic group of order $n$ generated by $\sigma$. Then
\begin{align*}
\xymatrix{\cdots \ar@{->}[r]^{N}& \Z C_n \ar@{->}[r]^{\sigma-1}& \Z C_n \ar@{->}[r]^{N}& \Z C_n \ar@{->}[r]^{\sigma-1}& \Z C_n \ar@{->}[r]& \Z}
\end{align*}
where $N= 1+ \sigma + \sigma^2 +\cdots +\sigma ^{n-1}$ is a free resolution of $\mathbb Z$. Thus,
\[ H^3(C_n, \Bbbk^\times)=\mathbb{G}_m(n):=\{a\in \Bbbk^\times: a^n=1\}.\]
Let $m, n \in \mathbb{N}$ such that $n|m$ and  $\pi: C_{m}\to C_{n}$ be the canonical group epimorphism. The induced map is
\begin{align*}
\pi^*: & H^3(C_n,\Bbbk^\times)\to H^3(C_m,\Bbbk^\times), &
q\mapsto q^{\frac{m}{n}}.
\end{align*}
Hence, if $q\in H^3(C_n, \Bbbk^\times)$ has order $s$, the canonical epimorphism $\pi: C_{sn}\to C_n$ trivializes $q$. Thus $\pi:C_{n^2}\to C_n$ trivializes all elements in $H^3(C_n, \Bbbk^\times)$. 
\end{example}

\begin{example}
Let $\zeta$ be a $n$-th root of unity, $\Lambda=(\mathbb{Z}/n\mathbb{Z})^{\oplus 3}$. We define
\begin{align*}
\omega& \in Z^3(\Lambda,\Bbbk^\times), & 
\omega(\vec{x},\vec{y},\vec{z})&=\zeta^{x_1y_2z_3}, & 
\vec{x},\vec{y},\vec{z}&\in \Lambda.
\end{align*}
Then, $\psi_\Lambda(\omega)(\vec{x},\vec{y},\vec{z})= \zeta^{\operatorname{det}([\vec{x},\vec{y},\vec{z}])}$, so $\psi_\Lambda(\omega)\neq 0$ and $\langle \psi(\omega) \rangle = \operatorname{Hom}(\wedge^3 \Lambda,\Bbbk^\times)$. It follows by Theorem \ref{Condiciones equivalentes} that $\omega$ is not trivializable.
\end{example}

\begin{remark}
In the proof of Proposition \ref{Prop trivializacion de 3-cociclos abeliano}, we prove that every cohomology class of a 3-cocycle $\omega$ that admits an abelian structure $(\omega,c)$ has order two. If $\Lambda$ is a cyclic group of odd order and $\omega \in H^3(\Lambda,\Bbbk^\times)$ is a non-zero element, there is not $c\in C^2(\Lambda,\Bbbk^\times)$ such that $(\omega,c)\in Z^3_{ab}(\Lambda,\Bbbk^\times)$, however $\psi_\Lambda(\omega)=0$.
\end{remark}

\section{Pointed coradically graded coquasi-Hopf algebras}\label{section:basic radically graded}

Let $\Gamma$ and $\Lambda$ be abelian groups and $p:\Gamma\to \Lambda$ a group epimorphism. We fix a section $\iota:\Lambda\to\Gamma$ of $p$, and a 3-cocycle $\omega \in H^3(\Lambda,\Bbbk^{\times}$.

We assume that there is $\alpha : \Gamma \times \Gamma \to \Bbbk^\times,$ such 
that $\delta(\alpha)=p^*\omega$; that is,
\begin{align*}
p^*\omega(g,h,k)&=\frac{\alpha(g,h)\alpha(gh,k)}{\alpha(g,hk)\alpha(h,k)}, & g,h,k&\in\Gamma,
\end{align*}




\subsection{Trivializing the non-associativity of Nichols algebras}

We consider the functor $\yd{\Bbbk^\omega\Lambda}\to \yd{\Bbbk^{p^*\omega}\Gamma}$ given on the objects by
\begin{align*}
V\mapsto&\widehat{V}, & \text{with }&\Gamma\text{-grading} & \widehat{V}_g &=\begin{cases}
V_{k} & g=\iota(k), \\ 0 & g\notin \iota(\Lambda),
\end{cases}
\end{align*}
and $\Gamma$-action via $p$; on the morphisms, it is just the identity. 

As $\delta(\alpha)=p^*\omega$, there is a braided tensor equivalence 
\begin{align*}
(F_\alpha,\overline{\alpha}):\yd{\Bbbk^{p^*\omega}\Gamma} \to \yd{\Bbbk\Gamma},
\end{align*}
where $F_\alpha(V)=V$ as $\Gamma$-graded vector spaces, with $\Gamma$-action 
\begin{align*}
g \cdot v&=\frac{\alpha(h,g)}{\alpha(g,h)}g\rhd v, & g,h & \in\Gamma, & v &\in V_g;
\end{align*} 
the functor is the identity for morphisms; the isomorphism constraints are
\begin{align*}
\overline{\alpha}_{V,V'}:F_\alpha(V\otimes V')&\to F_\alpha(V)\otimes F_\alpha(V)\\
v\otimes v' &\mapsto \alpha(g,h)\ v\otimes v', & g,h & \in\Gamma, \ v \in V_g, \ v'\in V_h.
\end{align*}

Fix $V\in \yd{\Bbbk^\omega\Lambda}$. Hence $W:=F_{\alpha}(\widehat{V}) \in \yd{\Bbbk\Gamma}$ is a braided vector space of diagonal type: there exists a basis $(x_i)_{i\in \I}$, elements $g_i\in\Gamma$, $\chi_i\in\widehat{\Gamma}$ such that $x_i\in W_{g_i}^{\chi_i}$, so the braiding is
\begin{align*}
c(x_i\ot x_j)& =g_i\cdot x_j\ot x_i= q_{ij} \ x_j \ot x_i, & q_{ij}&:=\chi_j(g_i), & i,j &\in\I.
\end{align*}
Coming back to $V$, let $\ell_i=p(g_i)\in\Lambda$, $i\in\I$. As $V=W$ as vector spaces and the $\Gamma$-grading on $W$ is induced by $\iota$, we have that $g_i=\iota(\ell_i)$ and $x_i\in V_{\ell_i}$ for all $i\in\I$. The \emph{quasi}-braiding in $\yd{\Bbbk^\omega\Lambda}$ is given by
\begin{align*}
c_V(x_i\ot x_j)& =g_i\triangleright x_j\ot x_i= q_{ij} \frac{\alpha(\ell_i,\ell_j)}{\alpha(\ell_j,\ell_i)} \ x_j \ot x_i, & i,j &\in\I.
\end{align*}

\subsection{Generation in degree one}\label{subsec:generation-deg-one}

We recall some results about the FRT construction.
Let $H(W)$ be the bialgebra corresponding to the diagonal braided vectors space $(W,c)$ \cite[VIII.6]{ka}. As an algebra, $H(W)$ is presented by generators $T_{j}^i$, $i,j\in\I$ and relations
\begin{align*}
& q_{ij}T_{j}^nT_i^m -q_{nm}T_i^m T_{j}^n, & i,j,m,n & \in\I.
\end{align*}
Hence $H(W)$ is a quantum linear space, so in particular it is $\Z^\I$-graded, with $\deg T_i^j=\alpha_i$, $i,j\in\I$. The coproduct satisfies
\begin{align*}
\Delta(T_i^j)&=\sum_{k\in \I} T_i^k\ot T_k^j, & i,j &\in\I.
\end{align*}
Hence $W$ is an $H(W)$-comodule with coaction
\begin{align*}
\rho: & W\to H(W)\ot W, & \rho(x_i) &= \sum_{j\in\I} T_i^j \ot x_j, & i&\in\I.
\end{align*}
The $R$-matrix $\mathtt{r}:H(W)\ot H(W)\to \Bbbk$ is determined by
\begin{align*}
\mathtt{r}(T_i^m \ot T_{j}^n) &= q_{ji} \delta_{i,m} \delta_{j,n}, & i,j,m,n & \in\I.
\end{align*}
and $c$ is also the braiding in the category of $H(W)$-comodules.

\begin{theorem}\label{teo:pre-Nichols-are-Nichols}
Let $R=\oplus_{n\geq 0} R_n\in \yd{\Bbbk^\omega\Lambda}$ be a post-Nichols algebra of $V=R_1$ such that $\dim R<\infty$. Then $R=\cB(V)$. 
\end{theorem}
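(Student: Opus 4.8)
The plan is to transport the problem from $\yd{\Bbbk^\omega\Lambda}$ to the (honestly braided) category $\yd{\Bbbk\Gamma}$ via the functors set up above, apply the known generation-in-degree-one result for finite-dimensional post-Nichols algebras of braided vector spaces of diagonal type, and then transport the conclusion back. Concretely, set $\widehat{R} = F_\alpha(\widehat{\,\cdot\,})$ applied to $R$; since the composite functor $\yd{\Bbbk^\omega\Lambda}\to\yd{\Bbbk^{p^*\omega}\Gamma}\to\yd{\Bbbk\Gamma}$ is a braided tensor functor (the first is fully faithful and the second is a braided tensor equivalence because $\delta(\alpha)=p^*\omega$), it sends graded Hopf algebras to graded Hopf algebras and subobjects of $C(V)$ to subobjects of $C(W)$, where $W=F_\alpha(\widehat V)$. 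Thus $\widehat R$ is a graded Hopf algebra in $\yd{\Bbbk\Gamma}$ with $\widehat R_1 = W$ and $\widehat R \subseteq C(W)$, i.e.\ a post-Nichols algebra of the diagonal braided vector space $W$, with $\dim \widehat R = \dim R < \infty$.

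Next I would invoke the structure of $W$: it is of diagonal type with matrix $(q_{ij})$, $q_{ij}=\chi_j(g_i)$, so the known results on Nichols algebras of diagonal type apply. The key input is that a finite-dimensional post-Nichols algebra of a diagonal braided vector space coincides with its Nichols algebra. The cleanest route is via the FRT machinery recalled just before the statement: $W$ is an $H(W)$-comodule, $c$ is the categorical braiding of $H(W)$-comodules, and finite-dimensional graded quotient/sub-Hopf-algebras generated appropriately are controlled by the known classification; alternatively one cites \cite{AAR-Dividedpowers} (or the relevant statement in \cite{angiono2016liftings}/the Andruskiewitsch--Schneider program) that a finite-dimensional post-Nichols algebra of a diagonal braided vector space equals the Nichols algebra — equivalently, that a finite-dimensional graded Hopf algebra in $\yd{\Bbbk\Gamma}$ generated in degree one with diagonal infinitesimal braiding and with all primitives in degree one has no proper graded Hopf algebra between $\cB(W)$ and $C(W)$ of finite dimension other than $\cB(W)$ itself. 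This gives $\widehat R = \cB(W)$.

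Finally I would push the equality back through the equivalence. Since braided tensor equivalences and fully faithful braided tensor embeddings send Nichols algebras to Nichols algebras (the Nichols algebra is characterized by a universal property preserved by such functors, cf.\ the characterization $\cB(V)=T(V)/\mathcal J(V)$ recalled above), we have $F_\alpha(\widehat{\cB(V)}) = \cB(W)$; combined with $F_\alpha(\widehat R)=\cB(W)$ and the faithfulness of $V\mapsto\widehat V$ and of $F_\alpha$ (both are the identity on underlying vector spaces and on morphisms), we conclude $R=\cB(V)$ inside $C(V)$.

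The main obstacle I anticipate is the second step: one must be careful that the cited ``post-Nichols $=$ Nichols in finite dimension'' statement genuinely applies here. The subtlety is that $W$ need not be of \emph{Cartan} type or even have a connected/arithmetic diagram, so one cannot simply quote the classification of finite-dimensional Nichols algebras of diagonal type; rather one needs the general fact that \emph{any} post-Nichols algebra strictly larger than $\cB(W)$ is infinite-dimensional, which for diagonal type follows from the theory of distinguished pre-Nichols/post-Nichols algebras and the behaviour of Hilbert series (PBW-type bases), or from the fact that $C(W)/\cB(W)$-``excess'' contributes divided-power-like elements forcing infinite dimension. Making this citation precise — and checking it does not secretly require $\Gamma$ finite or a restriction on the $q_{ij}$ beyond what the hypothesis $\dim R<\infty$ gives — is the delicate point; everything else is formal transport through the two functors.
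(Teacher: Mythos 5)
Your overall strategy --- kill the associator by twisting with the $2$-cochain $\alpha$, reduce to an honest braided vector space of diagonal type, and invoke generation in degree one for diagonal type --- is exactly the strategy of the paper's proof. But the vehicle you choose for the transport has a genuine gap. The assignment $V\mapsto\widehat V$ is \emph{not} a tensor functor $\yd{\Bbbk^\omega\Lambda}\to\yd{\Bbbk^{p^*\omega}\Gamma}$: on the component $V_k\ot V'_l$ the object $\widehat{V\ot V'}$ sits in $\Gamma$-degree $\iota(kl)$, while $\widehat V\ot\widehat{V'}$ sits in $\Gamma$-degree $\iota(k)\iota(l)$, and these differ by a generally nontrivial element of $\ker p$ because the section $\iota$ is not a group homomorphism. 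Since morphisms in $\yd{\Bbbk^{p^*\omega}\Gamma}$ are $\Gamma$-homogeneous, no monoidal structure on this functor can exist, so your claim that the composite $\yd{\Bbbk^\omega\Lambda}\to\yd{\Bbbk^{p^*\omega}\Gamma}\to\yd{\Bbbk\Gamma}$ is a braided tensor functor fails, and with it the assertion that it carries $C(V)$ to $C(W)$ and post-Nichols algebras to post-Nichols algebras. For the same reason the subspace $R\subseteq C(V)$, which is $\Lambda$-homogeneous, need not be $\Gamma$-homogeneous when regarded inside $C(\widehat V)$, so your $\widehat R$ need not even be an object of $\yd{\Bbbk^{p^*\omega}\Gamma}$, let alone a post-Nichols algebra of $W$ there.

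The paper repairs precisely this point by replacing the group gradings with the $\Z^{\I}$-grading intrinsic to the chosen basis: it forms the FRT bialgebra $H(W)$ of the diagonal braided vector space $(W,c)$, observes that $\alpha$ induces a well-defined normalized $2$-cocycle on $H(W)$ because $H(W)$ is $\Z^{\I}$-graded, and uses the resulting braided monoidal equivalence $^{H(W)}\cM\to\ ^{H(W)^{\alpha}}\cM$, which carries $(W,c)$ to $(V,c_V)$ and post-Nichols algebras to post-Nichols algebras; then $R$ is the image of a post-Nichols algebra $R'$ of $(W,c)$, and one concludes as you do. Two smaller remarks on your second step: the correct reference for ``a finite-dimensional post-Nichols algebra of diagonal type equals the Nichols algebra'' is \cite{Ang} (not \cite{AAR-Dividedpowers}, which concerns the largest post-Nichols algebra), and your worry about connectedness or Cartan type is unnecessary --- that theorem holds for arbitrary finite-dimensional diagonal type over an abelian group, which is all that you and the paper need.
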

\pf
By abuse of notation, let $\alpha: H(W)\ot H(W)\to \Bbbk$,
\begin{multline*}
\alpha(T_{i_1}^{m_1} \dots T_{i_s}^{m_s}, T_{j_1}^{n_1} \dots T_{j_t}^{n_t}) = \delta_{i_1,m_1} \dots \delta_{i_s,m_s} \delta_{j_1,n_1} \dots \delta_{j_t,n_t} \\ 
\alpha(g_{i_1}\dots g_{i_s}, g_{j_1} \dots g_{j_t}), \qquad s,t\in\N, \ i_k,m_k,j_l,n_l\in\I.
\end{multline*}
As $H(W)$ is $\Z^{\I}$-graded, the map is well-defined, and $\alpha(1,x)=\alpha(x,1)=\eps(x)$ for all $x\in H(W)$. Hence we may consider the coquasi-bialgebra $H(W)^\alpha$ obtained by a 2-cocycle deformation by $\alpha$.

Notice that $(V,c_V)$ is the image of $(W,c)$ under the braided equivalence $^{H(W)}\cM\to\ ^{H(W)^\alpha} \cM$ induced by the 2-cocycle $\alpha$, and this equivalence takes post-Nichols algebras of $(W,c)$ to post-Nichols algebras of $(V,c_V)$. Hence $R$ is the image of a post-Nichols algebra $R'$ of $(W,c)$, which is of diagonal type. By \cite{Ang}, $R'=\cB(W)$, so $R=\cB(V)$.
\epf

\subsection{Pointed coquasi-Hopf algebras and de-equivariantization}\label{subsec:de-equiv-pointed-graded}

Let $H$ be a coquasi-Hopf algebra and $G$ be an affine group scheme over $\Bbbk$. A central inclusion of $G$ in $H$ is a full braided embedding $\iota : \operatorname{Rep}(G) \to  \mathcal{Z}(\ ^H\mathcal{M})$ such that the composition $\iota\circ U:\Rep(G) \to  
\ ^H\mathcal{M}$ is full, where $U:\mathcal{Z}(
\ ^H\mathcal{M})\to\ ^H\mathcal{M}$ is the forgetful functor.

Let $\mathcal{O}(G)$ be the algebra of regular function over $G$. The algebra $\mathcal{O}(G)$ is a commutative algebra in the symmetric category $\operatorname{Rep}(G)$, and thus a commutative algebra in the braided tensor category $\mathcal{Z}(
\ ^H\mathcal{M})$. Following \cite{ENO3}, we define  the de-equivariantization $^H\mathcal{M}(G)$ of $^H\mathcal{M}$ by $G$, as the monoidal category of
of left $\mathcal{O}(G)$-modules in $^H\mathcal{M}$, with the tensor product $M\otimes_{\mathcal{O}(G)} N$.

Now we prove that each coradically graded pointed coquasi-Hopf algebra with trivializable 3-cocycle comes from the construction above.

\begin{theorem}\label{thm:A-corad-graded-deequiv}
Let $A$ be a finite-dimensional coradically graded coquasi-Hopf algebra such that $A_0\simeq \Bbbk^{\omega}\Lambda$, where $\omega$ is trivializable. Then $^{A}\cM$ is a de-equivariantization of a coradically graded pointed Hopf algebra over an abelian group.

\end{theorem}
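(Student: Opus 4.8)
The plan is to realize $^A\cM$ explicitly as a bosonization in $\yd{\Bbbk^\omega\Lambda}$ and then transport the trivialization machinery. First I would use the bosonization theory for coradically graded coquasi-Hopf algebras recalled in Section~\ref{section:preliminaries}: since $A=\oplus_{n\ge 0}A_n$ is coradically graded with $A_0\simeq\Bbbk^\omega\Lambda$, the projection and inclusion $\pi:A\to\Bbbk^\omega\Lambda$, $\iota:\Bbbk^\omega\Lambda\to A$ give $A\simeq R\#\Bbbk^\omega\Lambda$ for a graded Hopf algebra $R=\oplus_{n\ge0}R_n$ in $\yd{\Bbbk^\omega\Lambda}$ with $R_0=\Bbbk 1$. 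Because $A$ is coradically graded, $R$ is a graded Hopf algebra with $R_1$ consisting precisely of the primitives, i.e.\ $R$ is a post-Nichols algebra of $V:=R_1$ (it sits inside $C(V)$ and $\cB(V)\subseteq R$). As $\dim A<\infty$ forces $\dim R<\infty$, Theorem~\ref{teo:pre-Nichols-are-Nichols} gives $R=\cB(V)$, so $A\simeq\cB(V)\#\Bbbk^\omega\Lambda$ with $V\in\yd{\Bbbk^\omega\Lambda}$.

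Next I would invoke the hypothesis that $\omega$ is trivializable: by Theorem~\ref{Condiciones equivalentes} there is a finite abelian group $\Gamma$ and an epimorphism $p:\Gamma\to\Lambda$ with $p^*\omega$ trivial in $H^3(\Gamma,\Bbbk^\times)$, so we may choose $\alpha:\Gamma\times\Gamma\to\Bbbk^\times$ with $\delta(\alpha)=p^*\omega$ and a section $\iota:\Lambda\to\Gamma$ of $p$, exactly the data fixed at the start of Section~\ref{section:basic radically graded}. Applying the functor $\yd{\Bbbk^\omega\Lambda}\to\yd{\Bbbk^{p^*\omega}\Gamma}$, $V\mapsto\widehat V$, followed by the braided tensor equivalence $(F_\alpha,\overline\alpha):\yd{\Bbbk^{p^*\omega}\Gamma}\to\yd{\Bbbk\Gamma}$, I obtain $W:=F_\alpha(\widehat V)\in\yd{\Bbbk\Gamma}$, an ordinary Yetter--Drinfeld module over the (genuine) group algebra $\Bbbk\Gamma$. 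Then $H:=\cB(W)\#\Bbbk\Gamma$ is an honest finite-dimensional pointed Hopf algebra over the abelian group $\Gamma$, and it is coradically graded because $\cB(W)$ is a Nichols algebra. The braided equivalences carry $\cB(\widehat V)$ to $\cB(W)$ and $\cB(V)$ to $\cB(\widehat V)$ (Nichols algebras are preserved by braided tensor equivalences), so $H$ is the pointed Hopf algebra we want to de-equivariantize.

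It then remains to identify $^A\cM\simeq\, ^{\cB(V)\#\Bbbk^\omega\Lambda}\cM$ as a de-equivariantization of $^H\cM$. For this I would use the central inclusion picture of Section~\ref{subsec:de-equiv-pointed-graded} together with the construction of \cite{AGP}: the kernel $\ker p\le\Gamma$ is central in $H$, and $G:=\widehat{\ker p}$ (a finite group scheme, hence affine group scheme) has $\Rep(G)\simeq\Vc_{\ker p}$ embedding centrally into $\mathcal Z(^H\cM)$ via the canonical $\ker p$-grading coming from $\cB(W)\#\Bbbk\Gamma\twoheadrightarrow\Bbbk\Gamma\twoheadrightarrow\Bbbk(\Gamma/\ker p)=\Bbbk\Lambda$; the composite with the forgetful functor is full since $\ker p$ lands in the group-likes. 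The de-equivariantization $^H\cM(G)$, by \cite[Theorem 2.8]{AGP} or directly by computing $\mathcal O(G)$-modules, is equivalent as a tensor category to $^{A(H,\Gamma,\Phi)}\cM$ where the data $\Phi$ encodes $\alpha$ and $\omega$; concretely one checks that de-equivariantizing $\cB(W)\#\Bbbk\Gamma$ by the central $\widehat{\ker p}$ undoes the $2$-cocycle twist $\alpha$ and reintroduces the associator $\omega$, yielding $\cB(V)\#\Bbbk^\omega\Lambda$. Hence $^A\cM\simeq\, ^H\cM(G)$, which is the desired conclusion.

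The main obstacle I anticipate is the last step: verifying carefully that the de-equivariantization of the \emph{honest} Hopf algebra $H=\cB(W)\#\Bbbk\Gamma$ by the central affine group scheme $\widehat{\ker p}$ really produces the coquasi-Hopf algebra $\cB(V)\#\Bbbk^\omega\Lambda$ on the nose, with the associator $\omega$ emerging precisely from the failure of $\alpha$ to be a $2$-cocycle on $\Gamma$, and not merely something Morita/tensor-equivalent to it. This amounts to matching the explicit coquasi-bialgebra structure produced by \cite[Theorem 2.8, Proposition 3.3, Definition 3.5]{AGP} against the bosonization formulas recalled in Section~\ref{section:preliminaries}, tracking how the section $\iota:\Lambda\to\Gamma$ and the cochain $\alpha$ enter the multiplication, comultiplication and reassociator, and checking that the grading $\widehat V_g = V_k$ for $g=\iota(k)$ is exactly the one induced by the $\mathcal O(G)$-module structure. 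The braided-equivalence bookkeeping of Section~\ref{subsec:generation-deg-one} (the identities relating $q_{ij}$, $\alpha(\ell_i,\ell_j)$ and the quasi-braiding $c_V$) is precisely what makes this matching work, so I expect the argument to go through, but the cocycle-chasing is where the real content lies.
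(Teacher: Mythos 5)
Your first two steps match the paper exactly: the bosonization $A\simeq R\#\Bbbk^\omega\Lambda$ from \cite{ArdP-Jalg}, the identification $R=\cB(V)$ via Theorem~\ref{teo:pre-Nichols-are-Nichols}, and the choice of $p:\Gamma\to\Lambda$ and $\alpha$ with $\delta(\alpha)=p^*\omega$. Where you diverge is the final step, and that is exactly where your argument has a gap: you build the Hopf algebra $H=\cB(W)\#\Bbbk\Gamma$ first and then try to show that de-equivariantizing $H$ by $\widehat{\ker p}$ reproduces $\cB(V)\#\Bbbk^\omega\Lambda$ ``on the nose'' by matching the explicit formulas of \cite{AGP} --- a computation you explicitly defer (``I expect the argument to go through, but the cocycle-chasing is where the real content lies''). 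As written, the theorem is therefore not proved: the identification of the de-equivariantization with $^A\cM$ is asserted, not established. There is also a genuine error in your setup for that step: $\ker p$ is \emph{not} central in $H$ in general. The twist $F_\alpha$ makes $a\in\ker p$ act on $W_h$ by the scalar $\alpha(h,a)/\alpha(a,h)$, which need not be $1$ (take $\alpha$ an antisymmetric bicharacter, so $\delta(\alpha)=1$), so $1\# a$ need not be a central group-like of $H$ and the half-braiding on $\Rep(\widehat{\ker p})$ inside $\mathcal{Z}({}^H\cM)$ is not the flip.

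The paper's proof sidesteps both problems by reversing your order of operations. It introduces the intermediate coquasi-Hopf algebra $B:=\cB(\widehat V)\#\Bbbk^{p^*\omega}\Gamma$ and observes that $\pi=\id\ot p:B\to A$ is an evident surjection of coquasi-Hopf algebras (since $\cB(V)\simeq\cB(\widehat V)$, the braidings being equal). In $B$ the elements $1\# a$, $a\in K=\ker p$, \emph{are} central group-likes (the action of $K$ on $\widehat V$ factors through $p$, and all $p^*\omega$-factors involving $K$ are $1$), so $\Rep(\widehat K)=\Vc_K$ embeds centrally in $\mathcal{Z}({}^B\cM)$ with the flip as half-braiding, and $\Bbbk K=B^{\co\pi}$. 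Then \cite[Proposition 5.1]{BN} gives directly that ${}^A\cM$ is the category of $\Bbbk K$-modules in ${}^B\cM$, i.e.\ a de-equivariantization of ${}^B\cM$ by $\widehat K$ --- no formula-matching against \cite{AGP} is needed. Only \emph{afterwards} is $\alpha$ extended to a twist of $B$, producing the honest Hopf algebra $H=B^\alpha\simeq\cB(W)\#\Bbbk\Gamma$; since ${}^H\cM\simeq{}^B\cM$ as tensor categories, the de-equivariantization statement transports to $H$ (with the central algebra acquiring a twisted half-braiding, which is harmless). If you reorganize your last step this way --- de-equivariantize at the level of $B$ via Brugui\`eres--Natale, then twist --- the cocycle chase you were worried about disappears.
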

\pf
By \cite{ArdP-Jalg} there exists a post-Nichols algebra $R=\oplus_{n\geq 0} R_n\in \yd{\Bbbk^\omega\Lambda}$ of $V=R_1$ such that $A\simeq R\# \Bbbk^{\omega}\Lambda$; hence $\dim R<\infty$, and by Theorem \ref{teo:pre-Nichols-are-Nichols}, $R=\cB(V)$.
We consider $\widehat{V}\in \yd{\Bbbk^{p^*\omega} \Gamma}$: as the braiding is the same, $\cB(V)\simeq \cB(\widehat{V})$ as braided Hopf algebras, and 
\begin{align}\label{mapa de coquasi}
 \pi:=(\id\ot p): B:=\cB(\widehat{V})\# \Bbbk^{p^*\omega} \Gamma \to A=\cB(V)\#\Bbbk^{\omega}\Lambda
\end{align}  
is a projection of coquasi-Hopf algebras. 

Given an epimorphism $f:H\to Q$ of finite dimensional coquasi-Hopf algebras, it follows by \cite[Proposition 5.1]{BN} that $$H^{\operatorname{co}f}:=\{b\in B: (\operatorname{id}\otimes f)\Delta (b)=b\otimes 1\},$$ admits a structure of commutative algebra in $\mathcal{Z}(^H\cM)$ such that the tensor category of left $H^{\operatorname{co}f}$-modules in $\ ^H\cM$ is tensor equivalent to $^Q\cM$.

Set $K=\ker p$. We claim that there is a central inclusion $\iota:\ \operatorname{Rep}(\widehat{K})\to \mathcal{Z}(^{B}\mathcal{M})$, such that the central algebra $\mathcal{O}(\widehat{K})=\Bbbk K$ is the central algebra associated to the epimorphism \eqref{mapa de coquasi}. 

The inclusion $\Bbbk K\xhookrightarrow{} B, a\mapsto 1\#a$, is an injective  coquasi-Hopf algebra morphism, that induces a full tensor embedding $$\operatorname{Rep}(\widehat{K})=\Vc_{K}\xhookrightarrow{}  ^B\mathcal{M}.$$ 
Let  $V_a= \Bbbk v\in \  ^{\Bbbk K}\mathcal{M}$ be a one-dimensional comodule with $\rho(v)=a\otimes v$, $a\in K$. As $1\#a$ is a central group-like of $B$, for any $M\in\ ^{B}\mathcal{M}$, the flip map
\begin{align*}
c_{M,V_a}:M\otimes V_a&\to V_a\otimes M &
 m\otimes v &\mapsto v\otimes m,
\end{align*}
is an  isomorphism of $B$-comodules. Now \eqref{equ: centro} follows from the fact that 
\begin{align*}
p^*\omega(a,g,h)&=p^*\omega(g,a,h)=p^*\omega(g,h,a)=1 & 
\text{for all } & g,h \in \Gamma, a\in K.    
\end{align*}

Since $\Bbbk K=B^{\operatorname{co}\pi}:=\{b\in B: \operatorname{id}\otimes \pi\Delta (b)=b\otimes 1\}$, the central algebra associated to the surjective tensor functor  $\pi_*: \ ^B\cM \to \ ^A\cM$ is exactly $\Bbbk K$. By \cite[Proposition 5.1]{BN}, $^A\cM$ is a de-equivariantization of $^B\cM$ by $\widehat{K}$.

Recall that there is a map $\alpha:\Gamma\times \Gamma \to \Bbbk^\times$ such that $\delta(\alpha)=p^*\omega$; we can extend $\alpha$ linearly to a map $\alpha:B \ot B\to \Bbbk$ such that 
\begin{align*}
\alpha(B_n\ot B) & =\alpha(B\ot B_n)=0 & \text{for all }& n>0, 
\end{align*}
and $\alpha$ is a twist. Hence $H:=B^{\alpha}$ is a Hopf algebra; as a coalgebra, $H=B$ is coradically graded, with $H_0=\Bbbk\Gamma$. Hence $H\simeq R'\#\Bbbk\Gamma$ for some graded Hopf algebra $R'\in\yd{\Bbbk\Gamma}$, where $R'_1=F_{\alpha}(\widehat{V})$, so $H\simeq \cB(W)\# \Bbbk \Gamma$. Since $^H\cM$ is tensor equivalent to $^B\cM$, we have that $^A\cM$ is a de-equivariantization of the Hopf algebra $H$ by the group $\widehat{K}$.
\epf

\begin{proof}[Proof of Theorem \ref{main result}]

Let $\cC$ be a de-equivariantization of $^H \cM$, where $H$ is a finite-dimensional pointed Hopf algebra with abelian coradical. By \cite{angiono2016liftings} we may assume that $H$ is coradically graded. By \cite[Proposition 3.3]{AGP}, $\cC$ is the category of comodules over a finite-dimensional coradically graded coquasi-Hopf algebra with trivializable 3-cocycle.

On the other hand, assume that $\cC$ is a coradically graded tensor category such that $G(\cC)$ is abelian and $\psi_{G(\cC)}(\omega_{\cC})\equiv 1$; by hipothesis, $\cC$ is the category of comodules over a finite-dimensional coradically graded coalgebra. As $\cC$ is pointed, this coalgebra has an structure of pointed coquasi-Hopf algebra, see \cite[Proposition 2.6]{finite-tensor}, and Theorem \ref{thm:A-corad-graded-deequiv} applies.
\end{proof}

\begin{coro}
Let $\cC$ be a coradically graded finite tensor category admitting a braiding. Then $\cC$ can be realized as  the de-equivariantization of the category of comodules of a finite dimensional  coradically graded pointed  Hopf algebra over an abe\-lian group. \end{coro}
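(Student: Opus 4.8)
The plan is to reduce the statement about a coradically graded braided finite tensor category $\cC$ to Theorem \ref{main result}, so the only real work is checking its hypotheses: that $\cC$ is coradically graded (given), that $G(\cC)$ is abelian, and that $\psi_{G(\cC)}(\omega_\cC)\equiv 1$. First I would recall that $\cC$ is pointed, since a coradically graded finite tensor category is in particular pointed (its coradical, the degree-zero part, is a direct sum of invertible objects, and being generated in degree one forces every simple to be invertible). So $\cC$ is realized as comodules over a pointed coquasi-Hopf algebra with $\cC_0 \simeq \Bbbk^{\omega_\cC} G(\cC)$.

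The key structural input is the braiding. Since $\cC$ is pointed and braided, its maximal pointed subcategory — which is all of $\cC_0$, the fusion subcategory of invertible objects — is itself a pointed braided fusion category. By Example \ref{Ex pointed braid}, the group $G(\cC)$ of isomorphism classes of invertible objects is \emph{abelian} (the braiding isomorphisms $c_{X,Y}:X\ot Y\to Y\ot X$ force $X\ot Y\simeq Y\ot X$), and the associativity and braiding constraints equip the pair $(\omega_\cC,c)$ with the structure of an abelian $3$-cocycle in $Z^3_{ab}(G(\cC),\Bbbk^\times)$. This is precisely where the braiding does its job: without it $G(\cC)$ need not be abelian and $\omega_\cC$ need not admit an abelian structure.

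Next I would invoke Proposition \ref{Prop trivializacion de 3-cociclos abeliano}: writing $G(\cC)=\Z/n_1\Z\oplus\cdots\oplus\Z/n_m\Z$ and $p:\Gamma:=\Z/2n_1\Z\oplus\cdots\oplus\Z/2n_m\Z\to G(\cC)$ the canonical epimorphism, the pullback $p^*\omega_\cC \in H^3(\Gamma,\Bbbk^\times)$ is trivial because $(\omega_\cC,c)$ is an abelian $3$-cocycle. Hence $\omega_\cC$ is $p$-trivial, and by Theorem \ref{Condiciones equivalentes} (the implication \ref{item:omega-trivializable} $\implies$ \ref{item:psi-trivial}) we conclude $\psi_{G(\cC)}(\omega_\cC)\equiv 1$. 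Alternatively one could argue directly via condition \ref{item:braided-fusion-pointed}, but going through abelian cohomology is cleanest.

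With all three hypotheses of Theorem \ref{main result} verified, that theorem gives at once that $\cC$ is tensor equivalent to a de-equivariantization of a finite-dimensional pointed Hopf algebra over an abelian group, and moreover $\cC\simeq\ {}^{\cB(V)\#\Bbbk\Gamma}\cM$ for suitable $V$ and $\Gamma$, which is coradically graded and pointed — exactly the statement of the corollary. I do not expect a genuine obstacle here: the content was front-loaded into Theorem \ref{main result} and Proposition \ref{Prop trivializacion de 3-cociclos abeliano}; the only point requiring a little care is the identification of the associator class of the invertible subcategory of a braided $\cC$ with an abelian $3$-cocycle, which is standard (Eilenberg--Mac Lane, cf. Example \ref{Ex pointed braid}).
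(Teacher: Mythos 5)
Your proposal is correct and follows the paper's own proof essentially verbatim: the braiding restricts to the fusion subcategory of invertible objects, so $G(\cC)$ is abelian and $(\omega_\cC,c)$ is an abelian $3$-cocycle (Example \ref{Ex pointed braid}), whence Proposition \ref{Prop trivializacion de 3-cociclos abeliano} together with Theorem \ref{Condiciones equivalentes} gives $\psi_{G(\cC)}(\omega_\cC)\equiv 1$, and Theorem \ref{main result} concludes. One caveat: your parenthetical claim that a coradically graded finite tensor category is automatically pointed is false --- any non-pointed fusion category, e.g.\ $\operatorname{Rep}(S_3)$, is cosemisimple and hence coradically graded with everything in degree zero, and its coradical is not a sum of invertibles; pointedness should be carried as a standing hypothesis (as in the paper's abstract and in the proof of Theorem \ref{main result}) rather than derived from the grading.
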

\begin{proof}
Since $\cC$ is braided, the full subcategory generated by the invertible objects is also braided. Thus, the associator of $\cC$ is an abelian 3-cocycle. By Proposition \ref{Prop trivializacion de 3-cociclos abeliano} and Theorem \ref{main result}, $\cC$ is tensor equivalent to the de-equivariantization of a Hopf algebra of the form
$\cB(V)\#\Bbbk\Gamma$.  Hence $(\cC,c)$ is realized as the comodules on a coquasitriangular coquasi-Hopf algebra of the form $(\cB(V)\#\Bbbk^\omega \Lambda,r)$.
\end{proof}

Let $(\cB,c)$ be a braided tensor category. The Mueger's center or symmetric center $\mathcal{Z}_2(\cB)$ is the full tensor subcategory of all objects $Y\in \cB$ such that $c_{X,Y}\circ c_{Y,X}=\operatorname{id}_{Y\otimes X}$ for all $X\in\cB$. A braided tensor category is called symmetric if $\cB=\mathcal{Z}_2(\cB)$.

\begin{example}
Let $\cB=\Vc_{\Lambda}^{(\omega,c)}$ be a pointed fusion category. The Mueger's center of $\cB$ is the fusion subcategory with simple objects  given by 
\[\mathcal{Z}_2(\Lambda):=\{g\in \Lambda: c(g,h)c(h,g)=1, \ \text{for all }  h \in \Lambda\}.\]

Recall that the  map $q:\Lambda\to \Bbbk^\times, g\mapsto c(g,g)$ is a quadratic form. Since $q(gh) q(g)^{-1} q(h)^{-1} =c(g,h)c(h,g)$, we have that  \[\mathcal{Z}_2(\Lambda):=\{g\in \Lambda: q(gh)=q(g)q(h), \ \text{for all }  h \in \Lambda\}.\] Since $q$ only depends on the abelian cohomology class of $(\omega,c),$ the same is true of $\mathcal{Z}_2(\Lambda)$.
The restriction $q_{|\mathcal{Z}_2(\Lambda)}$ is a group homomorphism in $\operatorname{Hom}(\mathcal{Z}_2(\Lambda),\mathbb{Z}/2\mathbb{Z})$. Hence  $\mathcal{Z}_2(\cB)$ has a (possibly trivial) $\mathbb{Z}/2\mathbb{Z}$-grading.
\end{example}

Let $(\cB(V)\#\Bbbk^\omega[\Lambda],r)$ be a coquasitriangular coquasi-Hopf algebra. The $r$-form defines an abelian structure  $(\omega,c)\in Z^3(\Lambda,\Bbbk^\times)$, where $c:=r_{G(H)\times G(H)}$. Assume without loss of generality that $\omega|_{\mathcal{Z}_2(\Lambda)^{\times 3}}\equiv 1$.

As in \cite[Section 4]{bontea2017pointed}, the condition \eqref{r-matrix 1} implies that $V\in \mathcal{Z}_2(\Vc_{\Lambda}^{(\omega,c)})_-.$  Hence $\cB(V)\#\Bbbk\mathcal{Z}_2(\Lambda)$ is a coquasitriangular Hopf subalgebra of $\cB(V)\#\Bbbk^\omega\Lambda$. Then \cite[Theorem 1.1]{bontea2017pointed}, can be applied to $\cB(V)\#\Bbbk \mathcal{Z}_2(\Lambda)$. Hence, $r_1:=r|_{V\ot V}$ is a morphism in $\mathcal{Z}_2(\Vc_{\Lambda}^{(\omega,c)})$. 

In conclusion, as in \cite{bontea2017pointed}, every coquasitriangular structure on a coradially graded coquasi-Hopf algebra is determined by the following data: an abelian 3-cocycle structure $(\omega, c)\in Z_{ab}^3(\Lambda,\Bbbk^\times)$, an object $V\in \mathcal{Z}_2(\Vc_{\Lambda}^{(\omega,c)})_-,$ and  a morphism  $r_1:=r|_{V\ot V}$  in $\mathcal{Z}_2(\Vc_{\Lambda}^{(\omega,c)})$ .


\subsection{A pointed coquasi-Hopf algebra over an abelian group  with non-trivializable associator}

Let $A$ and  $B$  be finite abelian groups and $\alpha \in Z^2(A,\widehat{B})$ a 2-cocycle, that is, a  map $\alpha: A\times A\to \widehat{B}$ such that 
\begin{align*}
\alpha(x,y)\alpha(xy,z)=\alpha(x,yz)\alpha(y,z),&& x,y,z \in A.
\end{align*} 
We denote by $\widehat{B} \rtimes_{\alpha} A$ the central extension of $A$ by $\widehat{B}$ associated to $\alpha$. Explicitly, $\widehat{B} \rtimes_{\alpha} A=\widehat{B} \times A$ as a set, and the product is given by
\begin{align*}
(x,a)(x',a')&=(xx'\alpha(a,a'),aa'), & a,a'\in A, \ & x,x'\in B.
\end{align*}

The function 
\begin{align*}
\omega_\alpha((x_1,a_1),(x_2,a_2),(x_3,a_3))=\alpha(a_1,a_2)(x_3),
\end{align*}
is a 3-cocycle $\omega_\alpha\in Z^3(A\oplus B, \Bbbk^\times)$. It is easy to see that $\psi_{A\oplus B}(\omega_\alpha)=0$ if and only if $\alpha(a_1,a_2)=\alpha(a_2,a_1)$  for all $a_1, a_2\in A$.
By \cite[Theorem 3.6]{uribe}, the braided  categories  $\yd{\Bbbk^\omega_\alpha A\oplus B}$  and $\yd{\Bbbk \widehat{B} \rtimes_{\alpha} A}$  are equivalent.

\begin{example}
Let $A=\mathbb{Z}/2\mathbb{Z}\oplus \mathbb{Z}/2\mathbb{Z}$ and $B=\mathbb{Z}/2\mathbb{Z}$.  We define
\begin{align*}
\alpha: A\times A\to \widehat{B}, && \alpha((m_1,m_2),(n_1,n_2))=\chi^{m_1n_2},
\end{align*}
where $\chi:\mathbb{Z}/2\mathbb{Z}\to \Bbbk^\times$ is the non-trivial character.

The associated 3-cocycle is \[ \omega_\alpha(\vec{x},\vec{y},\vec{z})=(-1)^{x_1y_2z_3},\]where  $\vec{x},\vec{y},\vec{z}\in (\mathbb{Z}/2\mathbb{Z})^{\oplus 3}$. Then, $$\psi_\Lambda(\omega_\alpha)(\vec{x},\vec{y},\vec{z})= (-1)^{\operatorname{det}([\vec{x},\vec{y},\vec{z}])}.$$ Thus, $\psi(\omega)\neq 0$. It follows by Theorem \ref{Condiciones equivalentes} that $\omega$ is not trivializable.

The group $\widehat{B} \rtimes_{\alpha} A$ is isomorphic to $D_4$, the dihedral group of order $8$: it is a non-abelian group of order eight with two elements of order four.

In \cite[Example 6.5]{MH99}, Milinski and Schneider constructed a Nichols algebra $\cB(V)$ of dimension 64 over $D_4$. Since the braided categories $\yd{\Bbbk^\omega_\alpha A\oplus B}$  and $\yd{\Bbbk \widehat{B} \rtimes_{\alpha} A}$  are equivalent, there is a Nichols algebra $\cB(V')$ of dimension 64 in $\yd{\Bbbk^\omega_\alpha A\oplus B}$ . Hence, the bosonization
$A:= \cB(V')\# \Bbbk^{\omega_\alpha}(\mathbb{Z}/2\mathbb{Z})^{\oplus 3}$ is a coradically graded coquasi-Hopf algebra with non-trivializable associator.
\end{example}

\end{document}